\numberwithin{equation}{section}
\newtheoremstyle{fancy1}{10pt}{10pt}{\itshape}{12pt}{\textsc\bgroup}{.\egroup}{8pt}{
}
\newtheoremstyle{fancy2}{10pt}{10pt}{}{12pt}{\itshape}{.}{8pt}{ }
\theoremstyle{fancy1}
\newtheorem{lem}[equation]{Lemma}
\newtheorem{prop}[equation]{Proposition}
\newtheorem*{thm*}{Theorem}
\newtheorem{main}{Theorem}
\newtheorem*{main*}{Theorem}
\newtheorem*{cor*}{Corollary}
\newtheorem*{prop*}{Proposition}
\newtheorem*{problem*}{Problem}
\renewcommand{\thetable}{\theequation}
\theoremstyle{fancy2}
\newtheorem{rem}[equation]{Remark}
\newtheorem*{rems*}{Remarks}
\newtheorem*{rem*}{Remark}
\newtheorem*{example*}{Example}
\newcommand{\cref}[1]{Corollary~\ref{#1}}
\newcommand{\lref}[1]{Lemma~\ref{#1}}
\newcommand{\pref}[1]{Proposition~\ref{#1}}
\newcommand{\rref}[1]{Remark~\ref{#1}}
\newcommand{\tableref}[1]{Table~\ref{#1}}
\newcommand{\gt}{\theta}
\newcommand{\e}{\epsilon}
\newcommand{\Sph}{\mathbb{S}}
\newcommand{\Disc}{\mathbb{D}}
\newcommand{\C}{{\mathbb{C}}}
\newcommand{\R}{{\mathbb{R}}}
\newcommand{\Z}{{\mathbb{Z}}}
\newcommand{\QH}{{\mathbb{H}}}
\renewcommand{\H}{\ensuremath{\operatorname{H}}}
\newcommand{\G}{\ensuremath{\operatorname{G}}}
\newcommand{\SO}{\ensuremath{\operatorname{SO}}}
\newcommand{\Sp}{\ensuremath{\operatorname{Sp}}}
\newcommand{\U}{\ensuremath{\operatorname{U}}}
\newcommand{\SU}{\ensuremath{\operatorname{SU}}}
\newcommand{\Spin}{\ensuremath{\operatorname{Spin}}}
\renewcommand{\S}{\ensuremath{\operatorname{S}}}
\newcommand{\K}{\ensuremath{\operatorname{K}}}
\newcommand{\fg}{{\mathfrak{g}}}
\newcommand{\fk}{{\mathfrak{k}}}
\newcommand{\fh}{{\mathfrak{h}}}
\newcommand{\fm}{{\mathfrak{m}}}
\newcommand{\fn}{{\mathfrak{n}}}
\newcommand{\fp}{{\mathfrak{p}}}
\newcommand{\fl}{{\mathfrak{l}}}
\newcommand{\fso}{{\mathfrak{so}}}
\newcommand{\fsu}{{\mathfrak{su}}}
\newcommand{\fspin}{{\mathfrak{spin}}}
\def\con#1=#2(#3){#1 \equiv #2 \bmod{#3}}
\newcommand{\ml}{\langle}                     
\newcommand{\mr}{\rangle}                    
\newcommand{\tr}{\ensuremath{\operatorname{tr}}}
\newcommand{\diag}{\ensuremath{\operatorname{diag}}}
\renewcommand{\Im}{\ensuremath{\operatorname{Im}}}
\newcommand{\Ad}{\ensuremath{\operatorname{Ad}}}
\DeclareMathOperator{\Id}{Id}
\DeclareMathOperator{\spam}{span}
\newcommand{\no}{\noindent}
\begin{document}

\title{Smoothness Conditions in Cohomogeneity one manifolds}

\author{Luigi Verdiani}
\address{University of Firenze}
\email{verdiani@math.unifi.it}
\author{Wolfgang Ziller}
\address{University of Pennsylvania}
\email{wziller@math.upenn.edu}
\thanks{ The first named author was supported by a PRIN grant.   The second named author was supported by a grant from
the National Science Foundation}

\begin{abstract}
We present an efficient method for determining the conditions that a metric on a cohomogeneity one manifold, defined in terms of functions on the regular part, needs to satisfy in order to extend smoothly to the singular orbit.
\end{abstract}

\maketitle


A group action is called a cohomogeneity one action if its generic orbits are hypersurfaces. Such actions have been used frequently to construct examples of various types: Einstein metrics, soliton metrics, metrics with positive or non-negative curvature and metrics with special holonomy. See  \cite{DW, FH, GKS, GVZ, KS} for a selection of such results. The advantage of such metrics is that geometric problems are reduced to studying its behavior along a fixed geodesic $c(t)$ normal to all orbits.  The metric is  described by a finite collection of functions of $t$, which for each time specifies the homogeneous metric on the principal orbits. One aspect one needs to understand is what conditions these functions must satisfy if regular orbits collapse to a lower dimensional singular orbit. These smoothness conditions are often crucial ingredients in obstructions, e.g. to non-negative or positive curvature, see e.g. \cite{GVWZ,VZ1,VZ2}.  The goal of this paper is to devise a simple procedure in order to derive such conditions explicitly.

\smallskip

The local structure of a cohomogeneity one manifold near a collapsing orbit can be described in terms of Lie subgroups  $H\subset K\subset G$  with $K/H=\Sph^\ell$, $ \ell>0$. The action of $K$ on $\Sph^\ell$ extends to a linear action on $\Disc=\Disc^{{\ell}+1} \subset \R^{\ell+1}$ and thus $M=G\times_K \Disc$ is a homogeneous disc bundle, where $K$ acts as $(g,p)\to (gk^{-1},kp)$, and with boundary $G\times_K\partial\, \Disc=G\times_KK/H=G/H$ a principal orbit.   The Lie group $G$ acts by cohomogeneity one on $M$ by left multiplication in the first coordinate.   A compact (simply connected) cohomogeneity one manifold is the union of two such homogeneous disc bundles. For simplicity we write $M=G\times_K V$ with $V\simeq\R^n$.  Given a smooth $G$ invariant   metric on the open dense set of regular points, i.e., the complement of the lower dimensional singular orbit,     the problem is when the extension of this metric to the singular orbit is smooth. We first simplify the problem as follows:

\begin{main}\label{2planes}
Let $G$ act by cohomogeneity one on $M=G\times_K V$ and $g$ be a smooth cohomogeneity one metric  defined on the set of regular points in $M$. Then $g$ has a smooth extension to the singular orbit if and only if it is smooth when restricted to every 2 plane in the slice $V$ containing $\dot c(0)$.
\end{main}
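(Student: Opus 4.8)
The plan is to reduce the global smoothness question on $M$ to an equivariant extension problem on the slice $V$, and then to exploit the transitivity of the isotropy action to see that one (hence every) 2-plane already carries all the obstructions. The ``only if'' direction is immediate, since the restriction of a smooth tensor to a submanifold is smooth, so the content is the converse. For this I would first invoke the standard correspondence between $G$-invariant tensor fields on $G\times_K V$ and $K$-equivariant tensor fields on the slice: restriction to $V=\{e\}\times_K V$ identifies the metric $g$ with a $K$-equivariant map $f\colon V\setminus\{0\}\to W$, where $W$ is the $K$-representation of symmetric $2$-tensors built from $\fg/\fh$ and $V$, and $g$ extends smoothly to $G/K$ if and only if $f$ extends smoothly across $0\in V$. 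Everything then takes place on $V\cong\R^{n}$ ($n=\ell+1$), and the question becomes: when does a $K$-equivariant $f$, smooth on $V\setminus\{0\}$ and smooth on every 2-plane through $u:=\dot c(0)$, extend smoothly across $0$?

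The next step exploits transitivity. Since $K/H=\Sph^{\ell}$, the group $K$ acts transitively on the unit sphere of $V$ with $H=K_{u}$, and the linear isotropy action of $H$ on $u^{\perp}=T_{u}\Sph^{\ell}$ is transitive on directions, the round sphere being two-point homogeneous. Consequently $H$ acts transitively on the 2-planes through $u$, so by $K$-equivariance the restrictions of $f$ to the various planes $\Pi$ are all $H$-translates of its restriction to one fixed plane $\Pi_{0}=\spam\{u,e_{2}\}$; the hypothesis is therefore equivalent to smoothness of $f|_{\Pi_{0}}$ together with the $H$-symmetry it automatically carries. Moreover, $K$-equivariance shows that $f$ is completely determined by its radial profile $F(t):=f(tu)\in W^{H}$, and $u\in\Pi_{0}$.

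I would then characterize smooth extension of $f$ across $0$ by the existence of an asymptotic expansion $f\sim\sum_{d}P_{d}$ in $K$-equivariant homogeneous polynomial maps $P_{d}\colon V\to W$, together with the usual remainder estimates. Two facts drive the argument. First, restriction to $\Pi_{0}$ is injective on such polynomials: a $K$-equivariant homogeneous $P_{d}$ is determined by the single $H$-fixed vector $P_{d}(u)\in W^{H}$, and $u\in\Pi_{0}$. Second, each homogeneous piece appearing in the given expansion of $f|_{\Pi_{0}}$ is $H_{\Pi_{0}}$-equivariant and, by spreading it along the $K$-orbit of $u$ (equivalently, by averaging over $K$), extends to a genuine $K$-equivariant polynomial $P_{d}$ on all of $V$. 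Subtracting $\sum_{d\le N}P_{d}$ leaves a $K$-equivariant remainder that vanishes to order $N$ on $\Pi_{0}$, hence on every 2-plane through $u$, hence over the whole sphere.

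The step I expect to be the main obstacle is this final remainder estimate at the origin: smoothness on a single 2-plane controls only the radial derivatives and the derivatives in \emph{one} transverse direction, whereas smoothness on $V$ demands control of all mixed partials, including those differentiating in several independent transverse directions at once, data no single 2-plane sees. The way around this is again equivariance: because $f$ is reconstructed from the radial profile $F$ by the smooth $K$-action, every transverse and mixed-transverse derivative is a smooth expression in $F$, its derivatives, and the smooth derivatives of the action, so the only genuine constraints are parity and compatibility conditions on $F$ at $t=0$. These are precisely what smoothness on $\Pi_{0}$ detects, while the $H$-transitivity of the second step guarantees that the same conditions hold for every transverse direction. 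Verifying that this indeed upgrades ``$O(|v|^{N+1})$ on each 2-plane'' to ``$O(|v|^{N+1})$ together with all derivatives on $V$,'' uniformly over the sphere, is the technical heart of the proof.
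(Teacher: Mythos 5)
Your reduction to the slice and the ``only if'' direction are fine, but the argument breaks at its second step. You claim that, because the round sphere is two-point homogeneous, the isotropy group $H$ acts transitively on directions in $u^{\perp}$, hence on the $2$-planes through $u$. Two-point homogeneity of $\Sph^{\ell}$ is a statement about its full isometry group $\O(\ell+1)$, not about the (possibly much smaller) transitive subgroup $K$; the linear isotropy representation of $H$ on $\fp\simeq u^{\perp}$ is transitive on directions only when $\fp$ is $\Ad_H$-irreducible. This fails for most groups in the classification: for $K/H=\U(n+1)/\U(n)$ one has $\fp=\fp_0\oplus\fp_1$ with $\dim\fp_0=1$; for $\Sp(n+1)/\Sp(n)$ there is a $3$-dimensional module $\fp_0$ on which $H$ acts \emph{trivially}; for $\Spin(9)/\Spin(7)$ there are inequivalent summands of dimensions $7$ and $8$. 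In these cases the $2$-planes through $u$ are not all $H$-translates of one fixed plane $\Pi_0$, and smoothness on $\Pi_0$ plus $K$-equivariance genuinely does not control smoothness on planes through the other summands: the conditions attached to different summands involve different integers (the $a_i$, $d_i'$ of the paper) coming from one-parameter groups that are not conjugate under $H$. This is exactly why \lref{spheres} only asserts transitivity of $H$ on the unit sphere of each \emph{irreducible} summand, why up to four planes are needed (see the remark following \pref{2plane}), and why the paper's proof in the reducible case writes an arbitrary direction $w$ as an $H$-translate of a linear combination $\sum_j a_j v_j$ of representatives, one per summand, and then uses linearity of the derivative. Your downstream steps (injectivity of restriction of equivariant polynomials to $\Pi_0$, spreading by averaging) inherit this error, since they draw all their input from a single plane.

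The second gap is the one you flag yourself: upgrading ``smooth on the planes and $K$-equivariant'' to ``all mixed partials extend continuously at $0$''. Your justification --- that $f$ is reconstructed from its radial profile $F$ by the smooth $K$-action, so every mixed derivative is a smooth expression in $F$ and the action --- is circular precisely at the origin, where the parametrization $(t,k)\mapsto k\cdot F(t)$ degenerates; asserting that ``the only genuine constraints are parity and compatibility conditions on $F$'' is a restatement of the theorem, not a proof of it. The paper fills this hole with a concrete mechanism: for $p_i\to 0$ choose $k_i\in K$ with $k_ip_i$ on the normal geodesic and $k_i\to k_0$, choose $h_i\in H$ moving the differentiation direction into the distinguished planes, pass to the limit using equivariance, and then induct on the order of derivatives by treating the $k$-th derivative as an equivariant multilinear form to which the same sequential argument applies. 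Your polynomial-expansion route is closer in spirit to the Eschenburg--Wang approach the paper cites than to the paper's own proof, and it could in principle be completed; but the remainder estimate you defer is the entire content, and it must be established with one plane per irreducible summand, not one plane in total.
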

As we will see, it follows from the classification of transitive actions on spheres, that it is sufficient to require the condition only for a finite set of 2-planes $P_i=\{\dot c(0),v_i\}$, one for each irreducible summand in the isotropy representation of the sphere $K/H$. Thus  at most four 2-planes  are necessary.
 Furthermore,  $L_i=\exp(\theta v_i)\subset K$ is a closed one parameter group and hence the action of $L$ on $V$ and on a $K$ invariant complement  of $\fk$ in $\fg$ splits into 2 dimensional invariant subspaces $\ell_i$ isomorphic to $\C$, on which $L$ acts by multiplication with $e^{in_i\theta}$. The integers $n_i$ are determined by the weights of the representation of $K$ on $V$ and $\fm$. These integers will determine the smoothness conditions, see Table B and C.

To be more explicit,  choose a normal geodesic $c\colon[0,\infty)\to V$ orthogonal to all orbits. The metric on the regular part is determined  by its values along $c$, and via the action of $G$ this determines the metric on $M$.
Denote by $\fg,  \fh$ the Lie algebras of $G$ and $ H$, and let $\fn$ be an $\Ad_H$ invariant complement of $\fn\subset\fg$. Since the stabilizer group along $c$  is constant equal to $H$, $\fn$ can be identified with the tangent space to the regular orbits along $c$ using action fields, i.e. $X\in\fn \to X^*(c(t))$.
Thus  $g=dt^2+h_t$, where $h_t, t>0$ is a family of $G$-invariant metrics  on the regular orbits $g\cdot c(t)=G/H$, depending  smoothly on $t$. Equivalently,  $h_t$ is a smooth family of $\Ad_H$ invariant inner products on $\fn$.

\smallskip

The metric is  described in terms of the length of Killing vector fields. We choose a basis $X_i$ of $\fn$ and let $X^*_i$ be the corresponding Killing  vector fields. Then $X^*_i(c(t))$ is a basis of $\dot c^\perp(t)\subset T_{c(t)}M$ for all $t>0$ and the metric is determined by the $r$ functions $g_{ij}(t)=g(X_i^*,X_j^*)_{c(t)},\ i\le j$.

Combining the finite set of smoothness conditions obtained from Theorem A, we will show that:

\begin{main}\label{smooth} Let $g_{ij}(t),\ t>0$ be a smooth family of positive definite matrices describing the cohomogeneity one metric on the regular part along a normal geodesic $c(t)$.     Then there exist integers $a_{ij}^k$ and $ d_k$, with $\ d_k\ge 0$,  such that the metric has a smooth extension to all of $M$ if and only if
$$\sum_{i,j} a_{ij}^k\,g_{ij}(t)=t^{d_k}\phi_k(t^2)\quad \text{ for } k=1,\cdots, r, \ \text{ and } t>0 $$
 where $\phi_1,\cdots,\phi_{r}$ are smooth functions defined for $t\ge 0$.
\end{main}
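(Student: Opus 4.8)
The plan is to start from Theorem A and reduce the global smoothness question to the finite collection of $2$-planes $P_i=\{\dot c(0),v_i\}$, one per irreducible summand of the isotropy representation of $K/H=\Sph^\ell$. On each such plane the only relevant symmetry is the circle $L_i=\exp(\theta v_i)$, which acts by rotation on $P_i$ and, on $V$ and on the complement $\fm$, splits these into $2$-dimensional weight spaces $\ell_j\cong\C$ with $L_i$ acting by $e^{in_j\theta}$. Using $G$-invariance, for $k=\exp(\theta v_i)$ and the rotated point $c_\theta(t)=k\cdot c(t)$ (the circle of radius $t$ in $P_i$) one has
$$g(X_a^*,X_b^*)_{c_\theta(t)}=g\bigl((\Ad_{k^{-1}}X_a)^*,(\Ad_{k^{-1}}X_b)^*\bigr)_{c(t)},$$
so that the whole metric over $P_i$ is determined by the matrix $(g_{ab}(t))$ together with the rotation of the $X_a$ in their weight planes. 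I would fix once and for all a basis $\{X_a\}$ of $\fn$ consisting of weight vectors, so that $\Ad_{k^{-1}}$ acts on each $X_a$ as a planar rotation of angle $-n_a\theta$.

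The heart of the argument is a smoothness criterion on the model plane $P_i\cong\R^2$. The action fields spanning the $\fm$-directions do not vanish at the origin, so together with Cartesian coordinates on $P_i$ they give a smooth frame there; expressing the metric in this fixed frame at $c_\theta(t)$ amounts to conjugating $(g_{ab}(t))$ by the weight rotations, and a double-angle computation shows that the resulting coefficient functions are finite Fourier series in $\theta$ whose harmonics have frequencies of the form $m=\pm n_a\pm n_b$. The standard fact that a function $F(t)e^{im\theta}$ extends smoothly across the origin of $\R^2$ if and only if $F(t)=t^{|m|}\psi(t^2)$ for a smooth $\psi$ then translates smoothness over $P_i$ into: each Fourier coefficient equals $t^{|m|}\psi(t^2)$. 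Crucially, the double-angle identities produce these coefficients as integer ($\pm1$) combinations of the $g_{ab}$; for two summands of common weight $n$, for example, one gets $g(e_1,e_2)+g(f_1,f_2)$ (frequency $0$, so $d=0$) and $g(e_1,e_2)-g(f_1,f_2)$, $g(e_1,f_2)+g(f_1,e_2)$ (frequency $2n$, so $d=2n$). Since passing from the $g_{ab}$ to these Fourier coefficients is an invertible linear change, with integer coefficients, of the $r$ metric functions, smoothness of the restriction to a single plane $P_i$ is already equivalent to $r$ conditions of the asserted form $\sum_{a,b}a_{ab}^k g_{ab}=t^{d_k}\phi_k(t^2)$, with $d_k=|m_k|\ge0$.

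It then remains to combine the plane-by-plane criteria. Here I would choose the $v_i$, and hence the weight basis $\{X_a\}$, adapted simultaneously to all the circles $L_i$ --- using that the relevant directions can be placed in a common maximal torus of $K$ --- so that the same integer combinations $f_k=\sum a_{ab}^k g_{ab}$ diagonalize every plane at once, each plane $P_i$ only changing the exponent to some $d_k^{(i)}=|m_k^{(i)}|$. By Theorem A the extension is smooth if and only if every restriction is, i.e. if and only if $f_k=t^{d_k^{(i)}}\psi(t^2)$ for all $i$; since these conditions concern one and the same function $f_k$, their conjunction is the single strongest condition $f_k=t^{d_k}\phi_k(t^2)$ with $d_k=\max_i d_k^{(i)}$. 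This yields exactly the $r$ conditions of Theorem B, with integer coefficients $a_{ab}^k$ and exponents $d_k\ge0$ read off from the weights (recorded in Tables B and C).

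The step I expect to be the main obstacle is this last combination: verifying that the weight bases for the different $2$-planes can be taken compatible, so that one invertible integer change $f_k=\sum a_{ab}^k g_{ab}$ serves all planes, and carefully treating the off-diagonal, mixed-weight components and the isotypic pieces of higher multiplicity, where the Fourier coefficients couple several $g_{ab}$ and the exponent is $|n_a\pm n_b|$ rather than a single weight. Once the bases are aligned, taking the maximal exponent in each mode is immediate, but justifying the alignment --- and confirming that the resulting $r\times r$ matrix $(a_{ab}^k)$ is invertible, equivalently that no metric direction is left unconstrained --- is where the real work lies.
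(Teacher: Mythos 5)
Your per-plane analysis (weight decomposition under $L_i$, Fourier frequencies $\pm n_a\pm n_b$, and the criterion that $F(t)e^{im\theta}$ extends smoothly iff $F(t)=t^{|m|}\psi(t^2)$) is essentially the content of the paper's Section 3, and is sound. The gap is in your combination step, and it is not a technical detail but the actual core of the theorem. Your plan requires choosing the vectors $v_i$, and hence the weight bases, ``adapted simultaneously to all the circles $L_i$'' by placing them in a common maximal torus of $K$. This is impossible in general: for $K/H=\U(n+1)/\U(n)$ the trivial module $\fp_0$ is spanned by $F=\diag(i,0,\dots,0)$, whose centralizer in $\fk$ meets $\fp_1$ trivially, so no choice of $v_1\in\fp_1$ commutes with $v_0=F$; and for $K/H=\Sp(n+1)/\Sp(n)$ the three required circles are generated by $F_1,F_2,F_3\in\Im\QH$, which pairwise fail to commute (quaternion relations). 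The paper states this obstruction explicitly in Section 4: the $v_i$ do not lie in a common maximal torus, so the $L_i$-decompositions of $\fm$ are genuinely different and the conditions from different circles cannot be diagonalized by one integer change of basis. A second, independent error: even if alignment held, the conjunction of $f_k=t^{d_k^{(i)}}\psi_i(t^2)$ over $i$ is \emph{not} the single condition with $d_k=\max_i d_k^{(i)}$ when the exponents have different parities --- in that case the conjunction forces $f_k\equiv 0$, a strictly stronger constraint. (Your claim that a single plane already yields exactly $r$ conditions via an ``invertible linear change'' is also too optimistic: the $L$-weight basis cuts across the $\Ad_H$-isotypic decomposition, so rewriting the $L$-conditions in terms of the $r$ functions $g_{ij}$ typically produces \emph{more} than $r$ equations with compatibility relations among the $\phi$'s; see Example 3, where one circle gives nine conditions on four $\fm$-functions.)

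The paper's route avoids all of this. It keeps the fixed $\Ad_H$-adapted basis, collects the conditions from \emph{all} circles into one highly over-determined system $\sum_{i,j}a_{ij}^k g_{ij}=t^{d_k}\phi_k(t^2)$, $k=1,\dots,N$, and then argues abstractly: (1) every metric function $g_{ij}$ appears in at least one equation --- proved via the Weyl group element $w$, which forces $g(X^*,Y^*)$ to be even when $w\fn=\fn$, and forces $g(X^*,Y^*)\mp g((wX)^*,(wY)^*)$ to be odd/even when $w\fn=\fn'$ --- so $N\ge r$; (2) row reduction produces zero rows, i.e.\ linear relations among the even functions $\phi_k$, which are eliminated by substitution; (3) the reduced system must have rank exactly $r$, because rank $\le r-1$ would express the metric on the regular part in terms of $r-1$ arbitrary functions, contradicting that invariant metrics there are parametrized by $r$ free functions. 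This dimension-count is precisely the invertibility statement you flag as ``where the real work lies'' and leave unresolved; without it (and without a workable substitute for the impossible torus alignment) your proposal does not prove Theorem B.
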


We will show that this system of $r$  equations can also be solved for the coefficients $g_{ij}$ of the metric. The integers  $a_{ij}^k $  are determined by  the Lie brackets $[X_i,X_j]$, and $ d_k$ by the integers $n_i$. These equations hold for all $t$ in the case of a complete metric on a non-compact manifold, and on the complement of the second singular orbit when the manifold is compact.
 We will illustrate in some specific examples  that it is straightforward to determine these integer.

\smallskip

The  problem of smoothness was studied in \cite{EW} as well. There it was shown that smoothness is equivalent to showing that the k-th order Taylor polynomial of $g_{ij}$ is the restriction of an $\Ad_H$ invariant homogeneous polynomial of degree $k$ in  $\dim V$ variables with values in $S^2\fn$. In practice this description is difficult to apply, since one needs explicit expressions for these polynomials.

\smallskip

 In two future papers, we will  show that our new description is useful in proving general theorems about cohomogeneity one manifolds.
 In \cite{VZ3} we classify curvature homogeneous cohomogeneity one metrics in dimension $4$, where the smoothness conditions at the singular orbit make the problem algebraically tractable. In \cite{VZ4} we solve the initial value problem, starting at the singular orbit, for Einstein metrics, soliton metrics or for prescribing the Ricci tensor. The equations can be described in terms of the smooth functions $\phi_i$, and the system is smooth if and only if the values $\phi_i(0)$ satisfy certain compatibility conditions. These can be solved for some of the values $\phi_i(0)$, and the remaining ones are free parameters.  For this it is also important to understand the smoothness conditions for a symmetric $2$ tensor (in particular the Ricci tensor), which we indicate in Section 3.4.   This initial value problem was solved in \cite{EW}, only under strong assumptions on the adjoint representation of $H$ on $\fn$ using different more complicated methods.

\smallskip

The paper is organized as follows. After discussing some preliminaries in Section 1, we prove Theorem A in Section 2. In Section 3 we describe how the action of the one parameter group $L\subset K$ on $V$ and on $\fm$ is used to derive  the smoothness conditions. This is an over determined system of equations, and we will show how it can be reduced to the system in Theorem B. In Section 4 we illustrate the method in some specific examples. There the reader will also find  step by step instructions of how the process works. In order to facilitate the procedure we determine the integers $d_k$ for the action of $K$ on $V$ in Section 5.

\section{Preliminaries}

\bigskip

For a general reference for this Section see, e.g.,  \cite{AA,AB}.
A noncompact cohomogeneity one manifold is given by a homogeneous vector bundle
and a  compact one by  the union of two
homogeneous disc bundles. Since we are only interested in the smoothness conditions near a singular orbit, we restrict ourselves to only one such bundle. Let    $H,\, K ,\, G
$ be Lie groups with inclusions $H\subset K \subset G$ such that $H,K$ are compact and
$K/H=\Sph^{\ell}$. The transitive action of $K$ on
  $\Sph^{\ell}$ extends (up to conjugacy) to a unique linear action on the disc $V=\R^{{\ell}+1} $.
We can thus define the homogeneous vector bundle
$M=G\times_{K}V$ and $G$  acts on $M$  via left action in the first component. This action has principal isotropy group $H$, and singular isotropy group $K$ at a fixed base point $p_0\in G/K$ contained in the singular orbit.
  A disc $\Disc\subset V$ can be viewed as the slice of the $G$ action since, via the exponential map,  it can be identified $G$ equivariantly with a submanifold of M orthogonal  to the singular orbit at $p_0$.

\smallskip
Given a $G$-invariant metric $g$ on the regular part of the $G$ action, i.e. on the complement of $G\cdot p_0$, we want to determine when the metric can be extended smoothly to the singular orbit.
We choose   a geodesic $c$ parameterized by arc length and  normal to all orbits with $c(0)=p_0$. Thus, with the above identification, $c(t)\subset V$.
 At the regular points $c(t)$, i.e., $t>0$,
 the isotropy is constant  equal to  $H$.
We fix an $\Ad_H$ invariant splitting $\fg=\fh\oplus\fn$ and identify the tangent space $T_{c(t)} G/H=\dot{c}^\perp\subset T_{c(t)}M$,  with $\fn$
 via action fields: $X\in\fn\to
X^*(c(t))$. $H$ acts on $\fn$ via the adjoint representation
and a $G$ invariant metric on $G/H$ is described by an $\Ad_H$
invariant inner product on $\fn$. For $t>0$ the metric along $c$
is thus given  by $g=dt^2+h_t$ with $h_t$ a one parameter family of $\Ad_H$ invariant inner products on the vector space $\fn$, depending smoothly on $t$. Conversely, given such a family of inner products $h_t$, we define the metric  on the regular part of $M$ by using the action of $G$.

By the slice theorem, for the metric on $M$ to be smooth, it is sufficient that the  restriction to the slice $V$ is smooth. This restriction can be regarded as a map $g(t)\colon V\to S^2(\fn)$.
The metric is defined and smooth on $V\setminus\{0\}$, and  we need to determine when it admits a smooth extension  to $V$.

We choose an $\Ad_H$ invariant  splitting
$$
\fn=\fn_0\oplus\fn_1\oplus\ldots\oplus\fn_r.
$$
where $\Ad_H$ acts trivially on $\fn_0$ and irreducibly on $\fn_i$ for $ i>0$. On  $\fn_i, i>0$ the inner product $h_t$ is  uniquely determined up to a multiple, whereas on $\fn_0$ it is arbitrary. Furthermore, $\fn_i$ and $\fn_j$ are orthogonal if the representations of $\Ad_H$ are inequivalent. If they are equivalent, inner products are described by $1,2$ or $4$ functions, depending on wether the equivalent representations are orthogonal, complex or quaternionic.

Next, we  choose a  basis $X_i$ of $\fn$, adapted to the above decomposition, and thus the metrics $h_t$ are described by a collection of smooth functions $g_{ij}(t)=g(X_i^*(c(t)),X_j^*(c(t)))$, $\ t> 0$.  In order to be able to extend this metric smoothly to the singular orbit, they must  satisfy certain  smoothness conditions at $t=0$, which we will discuss in the next two Sections. Notice that in order for the metric to be well defined on M, the limit of $h_t$, as $t\to 0$, must exist and be  $\Ad_K$ invariant  on $\fm$.

 \smallskip

Choosing an $\Ad_K$ invariant complement to $\fk\subset\fg$, we obtain the decompositions
 $$\fg=\fk\oplus \fm  , \quad  \fk=\fh\oplus \fp \ \text{ and thus }\ \fn=\fp \oplus \fm . $$
 where we can also assume that $\fn_i\subset\fp$ or $\fn_i\subset\fm$.   Here $\fm$ can be viewed as the tangent space to the singular orbit $G/K$ at
  $p_0=c(0)$ and $\fp$ as the tangent space of the sphere $K/H\subset V$.

   It is important for us to identify $V$ in terms of action fields. For this we send  $X\in\fp$ to
  $\bar X:=\lim_{t\to 0}\frac{X^*(c(t))}{t}\in V$. Since $K$ preserves the slice $V$ and acts linearly on it, we thus have
  $X^*(c(t))=t\bar X\in V$.   In this language,    $V\simeq \dot c(0)\oplus \fp$. For simplicity we denote $\bar X$ again by $X$ and, depending on the context, use the same letter if considered as an element of $\fp$ or of $V$.

 Notice that since $K$ acts irreducibly on $V$,  an invariant inner product on $V$ is determined uniquely  up to a multiple. Since for any $G$ invariant metric we fix a geodesic $c$, which we assume is parameterized by arc length, this determines the inner product on $V$, which we denote by $g_0$.  Thus $g_0={g_{c(0)}}_{|V}$ for any $G$ invariant metric for which $c$ is a normal geodesic.

 $K$ acts via the
isotropy action $\Ad(K)_{| \fm}$ of $G/K$ on $\fm$ and via the slice
representation on $V$. The action on $V$ is determined by the fact that $K/H=\Sph^\ell$. Notice though that the action of $K$ on $\Sph^\ell$, and hence on $V$ is often highly ineffective. If $R\subset K$ is the ineffective kernel of the action, then there exists a normal subgroup $N\subset K$ with $K=(R\times N)/\Gamma$ where $\Gamma$ is a finite subgroup of the center of $R\times N$.  Thus $N$ acts almost effectively and transitively on $\Sph^\ell$ with stabilizer group $N\cap H$. We  list the almost effective actions by connected Lie groups acting transitively on spheres in Table A. From this, one can recover the action of $K$ on $V$ simply from the embedding $H\subset K$.

 The smoothness conditions only depend on the $\Id$ component of $K$ since, as we will see, they are determined by certain one parameter groups $L\simeq S^1\subset K_0$.  Since also $L\subset N$, the smoothness conditions only depend on the $\Id$ component of $N$ as well.

 We finally collect some specific properties of transitive actions on spheres.

 \begin{lem}\label{spheres} Let $\Sph^\ell=K/H\subset V$ be a sphere, with $K$ acting almost effectively and  $H$  the stabilizer group of $v_0\in V$. If $\fk=\fh\oplus\fp$ is an  $\Ad_H$ invariant decomposition, we have:
 \begin{itemize}
 	\item[(a)] If $\fp_1\subset\fp$ is an  $\Ad_H$ irreducible summand with  $\dim\fp_1>1$, then $H$ acts transitively on the unit sphere in $\fp_1$,
 	\item[(b)]   If $\fp_i\subset\fp$, $i=1,2$, are two  $\Ad_H$ irreducible summands with  $\dim\fp_i>1$  and $X_1,Y_1\in \fp_1$ and $X_2,Y_2\in \fp_2$ two pairs of unit vectors, then there exists an $h\in H$ such that $\Ad(h)X_i=Y_i$.
 	\item[(c)] If $X\in \fp$ lies in an $\Ad_H$ irreducible summand, then $\exp(tX)$ is a closed one parameter group in $K$ and leaves invariant the two plane spanned by $v_0$ and $X^*(v_0)$.
 \end{itemize}	
 	\end{lem}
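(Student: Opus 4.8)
The plan is to reduce all three statements to properties of the slice (linear isotropy) representation and then read them off the classification in Table~A. Identify $\fp$ with $T_{v_0}\Sph^\ell=v_0^\perp\subset V$ via $X\mapsto X^*(v_0)$; under this identification the $\Ad_H$ action on $\fp$ is the restriction to $H=K_{v_0}$ of the linear $K$-action on $V$. Write $A_X$ for the skew-symmetric endomorphism of $V$ induced by $X\in\fk$, so that $X^*(v_0)=A_Xv_0$ and $\exp(\theta X)$ acts on $V$ as $\exp(\theta A_X)$. Part (a) is then exactly the statement that $H$ is transitive on the unit sphere of each irreducible summand $\fp_i\subset v_0^\perp$ with $\dim\fp_i>1$, which I would check directly from Table~A: the decompositions of $v_0^\perp$ are $\R^{n-1}$ for $\SO(n)$; $\R\oplus\C^{n-1}$ for $\U(n)$ and $\SU(n)$; $\R^3\oplus\QH^{n-1}$ for $\Sp(n)$ (with $\R^3$ trivial) and for $\Sp(n)\Sp(1)$; $\R\oplus\R^2\oplus\QH^{n-1}$ for $\Sp(n)\U(1)$; $\C^3$ for $\G_2$; $\R^7$ for $\Spin(7)$; and $\R^7\oplus\R^8$ for $\Spin(9)$. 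In every case the summands of dimension $>1$ are the standard transitive modules, while the lines (Hopf directions, and the fixed $\R^3$ of $\Sp(n)$) are precisely those excluded by the hypothesis; small-rank degeneracies only delete summands.

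For (b) the reduction is: it suffices that the stabilizer $H_X=\{h\in H:\Ad(h)X=X\}$ of a unit vector $X\in\fp_1$ is transitive on the unit sphere of $\fp_2$. Indeed, by (a) pick $h_0\in H$ with $\Ad(h_0)X_1=Y_1$; the remaining freedom is the stabilizer of $Y_1$, which is conjugate to $H_{X_1}$, and its transitivity on $\fp_2$ lets us also achieve $\Ad(h)X_2=Y_2$ without disturbing $\Ad(h)X_1=Y_1$. Two summands of dimension $>1$ occur only for $\Sp(n)\Sp(1)$, $\Sp(n)\U(1)$ (with $n\ge2$) and $\Spin(9)$. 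In the first two, $H_X\supset\Sp(n-1)$ acts transitively on the unit sphere $\Sph^{4(n-1)-1}$ of $\fp_2=\QH^{n-1}$; in the last, $H_X=\Spin(6)=\SU(4)$ acts transitively on the sphere $\Sph^{7}$ of the spin module $\fp_2=\R^8$. This proves (b).

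For (c), note first that $\exp(\theta A_X)$ preserves $P=\spa\{v_0,A_Xv_0\}$ if and only if $A_X^2v_0\in P$; since skew-symmetry gives $A_X^2v_0\perp A_Xv_0$, invariance of $P$ is equivalent to $A_X^2v_0\in\R v_0$, i.e. to the orbit $\theta\mapsto\exp(\theta X)v_0$ being a great circle. For closedness I would reduce to $X\in\fn$: the ineffective kernel $R$ fixes $v_0$, so $\fr\subset\fh$ and $\fp$ may be taken inside $\fn$, where $N$ acts almost effectively; it then suffices that $\exp(\theta A_X)$ be periodic on $V$, i.e. that the spectrum of $A_X$ be commensurable. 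Both points follow from the observation, again read off Table~A, that for $X$ in an irreducible summand the nonzero eigenvalues of $A_X$ are all $\pm i\,|A_Xv_0|$ (the Clifford/Hopf structure of the transitive sphere actions): this gives periodicity, and it forces $v_0\perp\ker A_X$, whence $A_X^2v_0=-|A_Xv_0|^2v_0$ and $P$ is invariant.

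The main obstacle is (c). Parts (a) and (b) are clean transitivity statements, but the two assertions in (c) do not follow from transitivity alone: the identity $A_X^2v_0=-|A_Xv_0|^2v_0$ is the crux. A Schur/averaging argument over the unit sphere of $\fp_i$ only shows that the $v_0^\perp$-component of $A_X^2v_0$ vanishes on average, not for each $X$, so I would anchor (c) on the spectral description in Table~A rather than attempt a uniform representation-theoretic proof.
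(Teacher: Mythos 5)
Your parts (a) and (b) are correct and are essentially the paper's own argument: (a) is a case-by-case check against the classification, and (b) is exactly the paper's two-step proof --- first move $X_1$ to $Y_1$ using (a), then use transitivity of the stabilizer of $Y_1$ on the unit sphere of the second module. The paper writes this out only for $(\Spin(9),\Spin(7))$, where that stabilizer is $\Spin(6)=\SU(4)$ acting on $\C^4$, and calls the symplectic cases ``easily verified''; your observation that the stabilizer still contains a full $\Sp$-factor acting transitively on the quaternionic module fills those cases in correctly.

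Part (c), however, contains a genuine gap, and it sits exactly at what you call the crux. The spectral claim you anchor everything on --- that for $X$ in an irreducible summand all nonzero eigenvalues of $A_X$ are $\pm i\,|A_Xv_0|$ --- is false, and it fails precisely for the one-dimensional (Hopf) summands $\fp_0$ that part (c) must also cover, since the statement allows any irreducible summand, not just those of dimension $>1$. For $K/H=\SU(n+1)/\SU(n)$ the summand $\fp_0$ is spanned by $F=\diag(ni,-i,\dots,-i)$, whose rotation speeds on $V=\C^{n+1}$ are $n$ on the complex line through $v_0$ and $1$ on every other complex line; so for $n\ge 2$ the nonzero spectrum is not $\pm in=\pm i|A_Fv_0|$. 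The same failure occurs for $\U(n+1)/\U(n)_k$ (speeds $k+1$ and $k$) and for $\Sp(n+1)\cdot\U(1)_k$ (speeds $k+1$ and $k-1$); the paper's Section 6 records exactly these unequal integers ($a\ne d_i'$), so the uniformity you invoke cannot be ``read off'' the classification --- and note that Table A contains no spectral data at all, only groups and dimensions. What is true, and what your argument actually needs, is weaker: $v_0$ lies in a single rotation plane of $A_X$ (equivalently $A_X^2v_0\in\R v_0$), and the speeds are commensurable integers (hence periodicity). Both facts do hold, but only by a sphere-by-sphere verification that your proposal replaces with the false uniform ``Clifford/Hopf'' principle; your correct preliminary reductions (invariance of $P$ iff $A_X^2v_0\in\R v_0$, and closedness from periodicity plus almost effectiveness) do not supply it. This is also where your route diverges from the paper, which avoids any spectral computation: it notes that $\exp(tX)\cdot v_0$ is a geodesic of the normal homogeneous metric on $K/H$, that the round metric is a canonical variation of it along a Hopf fibration, which preserves geodesics with vertical or horizontal initial vector, so $\exp(tX)\cdot v_0$ is a great circle; closedness then follows from the all-or-none conjugacy argument furnished by (a) together with one explicit closed example per sphere.
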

 \begin{proof}
 Part (a) can be verified for each sphere separately, using the description of the adjoint representation, see e.g. \cite{Z}.

 Part (b) is easily verified in case 5, 5' and 6,6' in \tableref{transitive}. In the remaining case of  $K=\Spin(9)$ and $H=\Spin(7)$ we have $\fp=\fp_1\oplus\fp_2$ with
$\Spin(7)$ acting on $\fp_1\simeq\R^7$  via the 2-fold cover $\Spin(7)\to\SO(7)$, and  on $\fp_2\simeq\R^8$ via its spin representation. We can first choose an $h\in H$ with $\Ad(h)(X_1)=Y_1$. The claim then follows since the stabilizer of $H$ at $Y_1\in \R^7$ is $\Spin(6)$, and the restriction of the spin representation of $\Spin(7)$ on $\R^8$ to this stabilizer is the action of $\Spin(6)=\SU(4)$ on $\C^4$, which is  transitive on the unit sphere.

Since $\exp(tX)$ is the flow of the action field $X^*$, part (c) is equivalent to saying that $\exp(tX)\cdot v_0$ is a great circle in $\Sph^\ell$. Recall that for a normal homogeneous metric, i.e. a metric on $K/H$ induced by a biinvariant metric on $K$, the geodesics are of the form $\exp(tX)\cdot v_0$ for some $X\in\fp$. This implies the claim if  $\Ad_H$ acts irreducibly on $\fp$.
  In all other cases, one can view the irreducible summand as the vertical or horizontal space of a Hopf fibration. The round metric on $\Sph^\ell$ is obtained from the metric induced by a biinvariant metric on $K$ by scaling the fiber, see
 \cite{GZ}, Lemma 2.4. But such a change does not change the geodesics whose initial vector is vertical or horizontal. By part (a), the one parameter groups  $\exp(tX)$ are either all closed in $K$, or none of them are. But for each transitive sphere one easily finds one vector $v$ where it is closed, see Section 6.
 \end{proof}

\bigskip

\section{Reduction to a 2-plane}

\bigskip

In this Section we show how to reduce the question of smoothness of the metric on $M=G\times_KV$ to a simpler one.
At  an exceptional point, smoothness (of order $C^k$ or $C^\infty)$ of the metric is equivalent to the invariance with respect to the Weyl group  since the slice is the normal geodesic. Recall that the Weyl group element is an element $w\in K$ such that $w(\dot c(0)=-\dot c(0)$, and is hence uniquely determined mod $H$. Hence we only need to discuss the conditions  at  singular points.

\smallskip

 At a singular point,  the slice theorem for the action of $G$ implies that the metric is smooth if and only if its restriction to a slice $V$, i.e. $g_{|V}\colon V\to S^2(\fp\oplus\fm)$ is smooth. Indeed, in a neighborhood $W$ of the slice we have an equivariant diffeomorphism $U\times V\to W: (x,p)\to \exp(x)p$, where $U$ is a sufficiently small neighborhood of $0\in\fn$.   We choose for each $\Ad_H$ irreducible summand in $\fp$ an (arbitrary) vector $v_i\ne 0$. If there exists a 3-dimensional trivial module  $\fp_0\subset\fp$,  we pick in $\fp_0$ an arbitrary fixed basis.
\bigskip

\begin{prop}\label{2plane}
 A cohomogeneity one metric $g$ defined and smooth on the set of regular points in $M$  extends smoothly to the singular orbit  if and only if it is smooth when restricted to the 2 planes  $P_i\subset V$ spanned by $\dot c(0)$ and $v_i$.
\end{prop}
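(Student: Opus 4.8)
As recalled above, the slice theorem reduces the smoothness of $g$ across the singular orbit to the smoothness at the origin of $F:=g_{|V}\colon V\to S^2(\fn)$, a map that is smooth on $V\setminus\{0\}$. The plan is to exploit that $F$ is $K$-equivariant: since $g$ is $G$-invariant and $K$ is the isotropy at $p_0$, writing $\rho$ for the representation of $K$ on $S^2(\fn)$ induced by $\Ad_K$, one has $F(kv)=\rho(k)F(v)$. The ``only if'' statement is immediate, the restriction of a smooth map to a $2$-plane being smooth, so only the converse needs proof.

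I would first reduce $F$ to a single radial curve. As $K/H=\Sph^\ell$ acts transitively on the unit sphere of $V$ with $H=K_{v_0}$ and $v_0=\dot c(0)$, every $v\ne 0$ is of the form $v=r\,k\,v_0$, and equivariance shows that $F$ is determined by $f(r):=F(r v_0)$, which is just the family $h_r$ along $c$. Evaluating $F(h\,r v_0)=\rho(h)f(r)$ for $h\in H$ shows that $f$ takes values in $S^2(\fn)^H$, and $F$ is smooth on $V\setminus\{0\}$ if and only if $f$ is smooth on $(0,\infty)$. The task becomes to characterise, in terms of $f$ near $0$, when $F$ extends smoothly across the origin.

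The characterisation is read off on each plane $P_i$. By \lref{spheres}(c) the group $L_i=\exp(\theta v_i)$ is a closed circle in $K$ that preserves and rotates $P_i$; in a suitable parameter it acts on $P_i\cong\C$ by ordinary rotation and on $S^2(\fn)$ with integer weights. Splitting $S^2(\fn)=\bigoplus_n W_n$ into $L_i$-weight spaces and writing $f=\sum_n f_n$, equivariance gives
$$
F_{|P_i}(r,\theta)=\rho\bigl(\exp(\theta v_i)\bigr)f(r)=\sum_n e^{in\theta}f_n(r),
$$
exhibiting $F_{|P_i}$ as an $S^1$-equivariant map on $\C$. By the standard description of such maps, $F_{|P_i}$ is smooth at $0$ exactly when $f_n(r)=r^{|n|}\psi_n(r^2)$ with each $\psi_n$ smooth, the exponents $|n|$ being the integers $d_k$ of Theorem~B (determined by the weights $n_i$). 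Thus smoothness on the one plane $P_i$ amounts to an explicit list of vanishing conditions on $f$.

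The main obstacle is to show that these conditions, coming from the finitely many planes $P_i$, already force $F$ to be smooth on all of $V$; since smoothness on a union of subspaces need not give smoothness (witness $xy/(x^2+y^2)$), equivariance must be used decisively. I would work on the formal Taylor expansion of $F$, every homogeneous term of which is $K$-equivariant. By \lref{spheres}(a), $H$ is transitive on the unit sphere of each summand $\fp_i$ with $\dim\fp_i>1$, so all directions in such a summand are $H$-conjugate and the single plane $P_i$ controls it; a $3$-dimensional trivial summand $\fp_0$, on which $H$ acts trivially, instead needs the three basis planes, the cross-term control then coming from the circles of \lref{spheres}(c). Finally \lref{spheres}(b) gives the simultaneous transitivity required for the cross terms between the two summands in the only genuinely multi-summand case $K=\Spin(9)$, $H=\Spin(7)$; since $\fp$ has at most two nontrivial summands, or a single nontrivial summand together with a trivial $\fp_0$ of dimension one or three, at most four planes are needed, as asserted after Theorem~A. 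The delicate analytic point is uniform control as $v\to 0$: expressing a unit tangential direction through an action field $X^*$, $X\in\fk$, introduces a factor $r^{-1}$, and it is precisely the vanishing orders $r^{|n|}$ found above that absorb it, promoting smoothness on the planes to smoothness of $F$ on $V$.
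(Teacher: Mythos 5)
Your setup---the slice-theorem reduction, the $K$-equivariance of $F=g_{|V}$, and the observation that the ``only if'' direction is trivial---matches the paper, and your weight-space analysis of $F_{|P_i}$ is correct (it is in fact the content of Section 3 of the paper, not of this proposition). You also correctly identify the crux: smoothness on finitely many planes through the origin does not in general imply smoothness near the origin, so equivariance must do real work. But your resolution of this crux has a genuine gap, and it is circular exactly where it matters. You propose to ``work on the formal Taylor expansion of $F$, every homogeneous term of which is $K$-equivariant.'' The map $F$ is only known to be smooth on $V\setminus\{0\}$; it has no Taylor expansion at the origin---indeed it is not even known to be continuous there---until precisely the statement you are trying to prove has been established. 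Taylor-expanding the radial function $f$ at $r=0$ has the same problem: $f$ is smooth only on $(0,\infty)$, and the conditions $f_n(r)=r^{|n|}\psi_n(r^2)$ you extract give smoothness of $F_{|P_i}$ only, which returns you to the crux. Your closing sentence---that the vanishing orders $r^{|n|}$ ``absorb'' the factor $r^{-1}$ and thereby ``promote'' smoothness on the planes to smoothness of $F$ on $V$---asserts the conclusion rather than proving it; no mechanism is offered for controlling $F$ and its derivatives along sequences approaching $0$ from directions that lie in no $P_i$.

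The paper closes this gap with a bootstrap that avoids Taylor expansions altogether. One first defines $g$ at the origin as the limit of $h_t$ along the geodesic and checks that this value is $K$-invariant using the closed circles of \lref{spheres}(c). Continuity is then proved for an arbitrary sequence $p_i\to 0$: by transitivity of $K$ on spheres in $V$ one picks $k_i\in K$ with $k_ip_i$ on the geodesic and, after passing to a subsequence, $k_i\to k_0$; equivariance then transports the known behaviour along $c$ to the sequence $p_i$. For $C^1$, one additionally uses transitivity of $H$ (\lref{spheres}(a),(b)) to choose $h_i\in H$ with $h_i\to h_0$, moving an arbitrary differentiation direction $w$ into the distinguished plane (or, in the reducible case, onto a linear combination $\sum_j a_{ij}v_j$, handled by linearity of the derivative), where the hypothesis applies. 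Finally one inducts: the $k$-th order derivatives form a $K$-equivariant multilinear form on $V\setminus\{0\}$, to which the same continuity-plus-$C^1$ argument applies, giving $C^{k+1}$. This chain of limits---with compactness of $K$ and $H$ supplying the convergent conjugating elements $k_i\to k_0$, $h_i\to h_0$---is the missing idea in your proposal; without it, or some equivalent device, the passage from the planes $P_i$ to all of $V$ remains unproved.
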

\begin{proof} First notice that by \lref{spheres} (a), and since the metric is fixed along the normal geodesic $c$, the assumption implies that the metric is smooth when restricted to a 2 plane spanned by $\dot c(0)$ and $v$, where  $v$ is any  vector in an irreducible $\fp$ module.
	
	 It is sufficient  to show that $g(X,Y)_{|V}$ is smooth for any non-vanishing smooth vector fields $X,Y$ defined on $V$, i.e. $X,Y\colon V\to TM$.
	 We will use equivariance of the metric with respect to the action of $K$ on $V$. i.e.
 $$
 g(X,Y)(p)=g(k_*X,k_*Y)(kp) \text{ for all } p\in V\setminus\{0\}
 $$
 for the metric $g$ as well as all of its derivatives.

 We first define the metric at $0\in V$ and show it is $K$ invariant, as required. For this, define $g(X,Y)(0)=\lim_{t\to 0}g(X,Y)(c(t))$. If  $P_i$ is spanned by $\dot c(0)$ and $v_i$, then by \lref{spheres} (c) the one parameter group $L=\exp(tv_i)$ preserves the plane $P^*$ and equivariance with respect to $L\subset K$ implies that
 $g(0)$ is invariant under $L$.  By \lref{spheres} (a), the same is true for $\exp(tv)$ for any vector $v$ lying in an $\Ad_H$ irreducible submodule of $\fp$. But such one parameter groups, together with $H$, generate all of $K$.

We next prove continuity. Let $p_i$ be a sequence of points $p_i\in V\setminus\{0\}$ such that $p_i\to 0$. We want to show that
$g(X,Y)(p_i)$  converges to $g(X,Y)(0)$. For this, let $w_0$ be an accumulation point of $w_i=p_i/|p_i|$ and choose a subsequence $w_i\to w_0$. By \lref{spheres} (a) we can then choose $r_i\in K$ such that $r_iw_i=w_0$ and $r_i\to e\in K$, as well as $k_0\in K$ with $k_0w_0=\dot c(0)$. Setting $k_i=k_0r_i$, it follows that $k_iw_i=\dot c(0)$ with $k_i\to k_0$, which implies that $k_ip_i$ lies on the geodesic $c$.
Hence  equivariance of the metric, and continuity of the metric along the normal geodesic, implies that
$$g(X,Y)(p_i)=g({k_i}_* X,{k_i}_* Y)(k_i\cdot p_i)\to g({k_0}_* X,{k_0}_* Y)(0)=g( X, Y)(0) $$
where we also used that the metric at the origin is invariant under $K$. Since the same argument holds for any accumulation point of the sequence $w_i$, this proves continuity.

\smallskip

Next, we prove the metric is $C^1$. For simplicity we first assume that the action of $H$ on $\fp$   is irreducible and non-trivial and hence  $H$ acts transitively on the unit sphere in $\fp$. By assumption, the metric is smooth when restricted to the 2-plane   $P$  spanned by $v\in \fp$ and $\dot c(0)$.  Given a vector $w\in V$, possibly $w=\dot c(0)$,  we need to show that the derivative with respect to $w$ extends continuously across the origin, i.e. that
 \begin{equation}\label{con}
 \lim_{i\to\infty}\frac \partial{\partial w} g(X,Y)(p_i)=\frac \partial{\partial w} g(X,Y)(0)
 \end{equation}
   for  any sequence $p_i\in V$ with $p_i\to 0$. Let us first show that the right hand side derivative in fact exists. For this, since $K$ acts transitively on every sphere in $V$, we can choose $k\in K$ such that $kw\in P$ and hence:
   $$
   \frac \partial{\partial w} g(X,Y)(0)=\lim_{h\to 0}\frac {g( X, Y)(h \cdot w)-g( X,Y)(0)}h=\lim_{h\to 0}\frac {g( k_*X, k_*Y)(h \cdot kw)-g( k_*X,k_*Y)(0)}h
   $$
   where we have used $K$ equivariance away from the origin and $K$ invariance of $g$ at the origin. But the right side is the derivative
   $$
    \frac \partial{\partial (kw)} g(k_*X,k_*Y)(0)
   $$
   which exists by assumption since $kw\in P$.

   Now choose as before $k_i\in K$ such that $k_ip_i$ lies on the geodesic $c$. Since
   $H$ acts transitively on the unit sphere in $\fp$, and since $\fp$ is the orthogonal complement to $\dot c(0)\in V$, we can choose $h_i\in H$ such that $h_ik_iw$ lies in $P$. As before, we can assume that $k_i\to k_0$ and $h_i\to h_0$. Equivariance and smoothness of the metric away from the origin implies that for each fixed $i$
\begin{eqnarray*}
\frac \partial{\partial w} g(X,Y)(p_i)=  \frac \partial{\partial (h_ik_iw)} g({(h_i k_i)}_* X,{(h_i k_i)}_* Y)( h_ik_i p_i)
\end{eqnarray*}
 Since $h_ik_i p_i=k_i p_i$ lies on the geodesic, and since $h_ik_iw\in P$, we get
\begin{eqnarray*}
\lim_{i\to\infty}\frac \partial{\partial w} g(X,Y)(p_i)&=& \frac \partial{\partial (h_0k_0w)}\ g({(h_0 k_0)}_* X,{(h_0 k_0)}_* Y)( 0)\\
&=&\lim_{h\to 0}\frac {g({(h_0 k_0)}_* X,{(h_0 k_0)}_* Y)(h \cdot h_0k_0w)-g({(h_0 k_0)}_* X,{(h_0 k_0)}_* Y)(0)}h\\
&=&\lim_{h\to 0}\frac {g( X, Y)(h \cdot w)-g( X,Y)(0)}h=\frac \partial{\partial w} g(X,Y)(0)
\end{eqnarray*}
Thus the metric is $C^1$. The proof proceeds by induction. Assume the metric is $C^{k}$.
 This means that $T(w_1,\dots,w_k,X,Y)(p)=\frac{ \partial^k}{\partial w_1\dots \partial w_k} g(X,Y)(p)$ is a smooth multi linear form on the slice $V$ which is  equivariant in all its  arguments. We can thus use the same proof as above  to show  that
   $$
  \frac\partial{\partial w}\left( \frac{ \partial^k}{\partial w_1\dots \partial w_k} g(X,Y)\right)(p)
   $$ extends continuously across the origin, and hence the metric is $C^{k+1}$.

   We now   extend the above argument to the case where $\fp$ is not irreducible.
    Let $P_i$ be the 2-plane spanned by $v_i$ and $\dot c(0)$.
  Then \lref{spheres} implies that any vector in $\fp$, can be transformed by the action of $H$ into a linear combination of the vectors $v_i$. Following the strategy in the previous case, we choose $k_i\in K$ such that $k_ip_i$ lies on the geodesic $c$, and $h_i\in H$ such that $h_ik_iw=\sum a_{ij}v_j$. Furthermore, $k_i\to k_0$ and $h_i\to h_0$ with $h_0k_0w=\sum a_{0j}v_j$.  By  linearity of the derivative, and since the metric is smooth on $P_i$ by assumption, we have
   \begin{eqnarray*}
\lim_{i\to\infty}\frac \partial{\partial (h_ik_iw)} g({(h_i k_i)}_* X,{(h_i k_i)}_* Y)( h_ik_i p_i)&=& \lim_{i\to\infty}\sum_j a_{ij}\frac \partial{\partial v_j} g({(h_i k_i)}_* X,{(h_i k_i)}_* Y)( h_ik_i p_i)\\
&=&\sum_j \lim_{i\to\infty}a_{ij}\frac \partial{\partial v_j} g({(h_i k_i)}_* X,{(h_i k_i)}_* Y)( h_ik_i p_i)\\
&=&\sum_j a_{0j}\frac \partial{\partial v_j}\ g({(h_0 k_0)}_* X,{(h_0 k_0)}_* Y)( 0)\\
&=&\frac \partial{\partial (h_0k_0w)}\ g({(h_0 k_0)}_* X,{(h_0 k_0)}_* Y)( 0).
\end{eqnarray*}
    The proof now continues as before.
\end{proof}

\begin{rem}
Notice that unless the group $K$ is $\Sp(n)$ or $\Sp(n)\cdot \U(1)$, only one or two 2-planes are required. For the exceptions one needs four resp three  2-planes. Notice also, that we can choose any vector $v$ in an irreducible submodule in $\fp$. Indeed, the condition is clearly independent of such a choice since $H$ acts transitively on the unit sphere in every irreducible submodule.

\smallskip

We point out that    \pref{2plane} also holds for any tensor on $M$ invariant under the action of $G$, using the same strategy of proof.
\end{rem}

\bigskip

\section{Smoothness on 2-planes}

\bigskip

In this Section we show that smoothness on 2-planes can be determined explicitly in a simple fashion.

 Recall that on $V$ we have the inner product  $g_0$ with $g_0={g_{c(0)}}_{|V}$ for any $G$ invariant metric with normal geodesic $c$. We fix a basis $e_0,e_1,\ldots,e_k$ of $V$, orthonormal in $g_0$, such that $c$   is given by the line  $c(t)=te_0=(t,0,\ldots,0)$. The tangent space to $M$ at the points of the normal geodesic can be identified with $\dot c(t)\oplus \fm\oplus\fp$ via action fields. The metric $g=dt^2+h_t$ on the set of regular points in $M$ is determined by a family of $\Ad_H$ invariant inner products $h_t$ on $ \fm\oplus\fp $, $t>0$, which depend smoothly on $t$. Furthermore, $\fm$ and $\fp$ are orthogonal at $t=0$, but not necessarily for $t>0$.
The inner products $h_t$  extend in a unique and smooth  way to  $ V\setminus\{0\}$ via the action of $K$. In order to prove smoothness at the origin, it is sufficient to show that $g(X_i,X_j)$ is smooth for some  smooth vector fields which are a basis at every point in a neighborhood of $c(0)$. For this we use the action fields $X_i^*$ corresponding to an appropriately chosen basis $X_i$ of $\fm$, restricted to the slice $V$, and   the (constant) vector fields $e_i$ on $V$. Recall also that
we identify $\fp$ with a subspace of $V$ by sending $X\in\fp$ to $\lim_{t\to 0}\frac{X^*(c(t))}{t}\in V$ and that  $X^*(c(t))=t X $.    Finally, we have the splitting $\fp=\fp_1\oplus\ldots\oplus\fp_s$ into $\Ad_H$ irreducible subspaces.

According to \pref{2plane}, it is sufficient to determine smoothness on a finite list of 2-planes. Let $P^*\subset V$ be one of those 2-planes, spanned by  $e_0=\dot c(0)$ and $X\in\fp_i$ for some $i$. We  normalize $X$ such that  $L:=\{\exp(\theta X)\mid \theta\in\R,\ 0\le \theta\le 2\pi\}$ is a closed one parameter subgroup of $K$. By \lref{spheres},
the one parameter group   $L$ preserves $P^*$, but may not act effectively on it, even if $K$ acts effectively on $V$. Since $L\simeq S^1$, acting via rotation on $P^*$,  the   ineffective kernel is  $L\cap H$. Let $a$ be the order of the finite cyclic group $L\cap H$. Equivalently, $a$ is the largest integer with $\exp(\frac{2\pi}{ a}X)c(0)=c(0)$, or equivalently  $\exp(\frac{2\pi}{ a}X)\in H$. Thus $X/a$ has unit length in $g_0$ and $ L$ operates on $P^*$  as a rotation $R(a\theta)$ in the orthonormal basis $\dot c(0), X/a$.  We can also assume $a>0$ by replacing, if necessary, $ X $ by $- X$. This integer $a$ will be a crucial ingredient in the smoothness conditions. Notice that  $a$ is the same for any  vector $X\in\fp_i$ and we can thus simply denote it by $a_i$. In the Appendix we will compute the integers $a_i$ for each almost effective transitive action on a sphere.

The action of $L$ on $\fm$ decomposes $\fm$:
 $$\fm=\ell_0\oplus\ell_1,\cdots,\ell_r \text{ with } L_{|\ell_0}=\Id, \text{ and } L_{|\ell_i}=R(d_i\theta)$$ for some integers $d_i$.
 Similarly we have a decomposition of $V$:
  $$V=\ell'_{-1}\oplus\ell_0'\oplus\ell_1',\cdots,\ell_s' \text{ with } \ell'_{-1}=\spam\{\dot c(0),X\},\    L_{|\ell_{-1}'}=R(a\theta),\ L_{|\ell'_0}=\Id \text{ and } L_{|\ell_i '}=R(d_i'\theta).$$
  We choose the basis $e_i$ of $V$ and $X_i$ of $\fm$ such that it is adapted to this decomposition  and oriented in such a way that  $a,d_i$ and $d_i'$ are positive. For simplicity, we denote the basis of $\ell_i$ by $Y_1,Y_2$, the basis of $\ell_i'$ by $Z_1, Z_2$, and reserve the letter $X$ for the one parameter group $L=\exp(\theta X)$. We choose the vectors $Z_i\in\fp$ such that they correspond to $e_{i+1}$ under the identification $\fp\subset V$ and hence $Z_i^*(c(t))=t e_{i+1}\in V $, as well as $X^*(c(t))=t e_0$.   We determine the smoothness of inner products module by module, and observe that an $L$ invariant function $f$ on $P^*$ extends smoothly to the origin if and only if its restriction to the line $te_0$ is even, i.e. $f(te_0)=g(t^2)$ with $g\colon (-\e,\e)\to\R$ smooth. Furthermore, we use the fact  that the metric $V\to S^2(\fp\oplus\fm)$  is  equivariant with respect to the action of $K$, and hence $L$. Once the condition is determined when inner products are smooth when restricted to $P^*$, we restrict to the geodesic $c$ to obtain the smoothness condition for  $h_t$.

  \smallskip

 \no   In the following, $\phi_i(t)$ stands for a generic smooth function defined on an interval $(-\e,\e)$.

\smallskip

We will separate the problem into three parts: smoothness of scalar products of elements in $\fm$, in $\fp$ and mixed scalar products between elements of $\fm$ and $\fp$. We will start with the easier case of the metric on $\fp$.

\subsection{Smoothness on $\fp$.}\label{slice}
  Recall that on a 2-plane a metric given in polar coordinates by $dt^2+f^2(t)\,d\theta^2$ is smooth if and only if $f$ extends to a smooth odd function with
 $f(0)=0$ and $f'(0)=1$, see, e.g., \cite{KW}. If $X$ has unit length in the Euclidean metric $g_0$, we have $X^*=\frac{\partial}{\partial\theta}$ in the two plane spanned by $\dot c(0)$ and $X$. Hence smoothness on $\fp$ is equivalent to:

 \begin{equation}\label{ppart}
 g_{c(t)}(X^*,X^*)=  t^2 +t^4 \phi(t^2)\ \ \text{ for all } \ \  X\in\fp  \ \text{ with } \ g_0(X,X)=1
 \end{equation}

 \noindent for some smooth function $\phi$, defined on an interval $(-\e,\e)$.

 Notice that $\fp_i$ and $\fp_j$, for $i\ne j$, are orthogonal for any $G$ invariant metric, unless $(K,H)=(Sp(n),Sp(n-1))$, in which case there exists a 3 dimensional module $\fp_0$ on which $\Ad_H$ acts as $\Id$. We choose three vectors $X_i\in\fp_0$, orthonormal in $g_0$.
Applying \eqref{ppart} to $(X_i^*+X_j^*)/\sqrt{2}$, it follows that   the metric is smooth on $\fp_0$ if and only if
\begin{equation}\label{ppart2}
g_{c(t)}( X_i^*,X_j^*)=t^2\delta_{ij}+t^4\phi_{ij}(t^2) \ \text{ where } \  X_i\in\fp_0,  \ K=Sp(n) \text{ and } \ g_0(X_i,X_j)=\delta_{ij}
\end{equation}
\noindent for some smooth functions $\phi_{ij}$.
\smallskip

It may sometimes be more convenient, as we do in the proofs, to normalize $X$ such that
   $L=\{\exp(\theta X)\mid  0\le \theta\le 2\pi\}$ is a closed one parameter group in $K$.  In that case, let $t_0$ be the first value such that $\exp(t_0 X)\in H$. Then $t_0=\frac{2\pi}{a}$ for $a=|L\cap H|$ and hence $X/a$ has unit length in $g_0$.

   Thus in this normalization we need to replace \eqref{ppart} by:

    \begin{equation}\label{ppart3}
   g_{c(t)}(X^*,X^*)= a_i^2 t^2 +t^4 \phi(t^2)\ \ \text{ for all } \ \  X\in\fp_i
   \end{equation}

 \no  For  a 3-dimensional module $\fp_0$ we will see in Section 5 that $a_i=1$ and hence in this case \eqref{ppart2} remains valid.

 \smallskip

  See \cite{V} for a more detailed description.

\begin{rem}
 One easily modifies the smoothness conditions if the geodesic is not necessarily parameterized by arc length, but still orthogonal to the regular orbits. The only difference is that in this case $g_{c(t)}(\dot c,\dot c)= \psi(t) t^2$  and  $g_{c(t)}(X^*,X^*)=  \phi(t) t^2 $  for $X\in\fp$ with $\phi,\psi$ even and $\phi(0)=\psi(0)>0$, where $X$ has unit length in $g_0$. In the second normalization of $X$ we need that $\phi(0)=a^2\psi(0)$ if $g_{c(t)}(X^*,X^*)=  \phi(t) t^2 $ .
\end{rem}

\subsection{Inner products in $\fm$.}
\label{mproducts}
In the remaining sections $L= \{\exp(\theta X)\mid 0\le \theta \le 2\pi\}$ is a  one parameter group acting via $R(a\theta)$ on $\ell'_{-1}$.
 We first describe the inner products in a fixed  module $\ell_i$.

\begin{lem}\label{irred} Let $\ell$  be an irreducible $L$ module in $\fm$ on which $L$ acts via a rotation $R(d\theta)$ in a basis $Y_1,Y_2$.
 If the metric on $\ell$ is given by  $g_{ij}=g_{c(t)}(Y^*_i,Y^*_j)$, then
$$
\left(
  \begin{array}{cc}
    g_{11} & g_{12} \\
    g_{12} & g_{22} \\
  \end{array}
\right)=   \left(
  \begin{array}{cc}
    \phi_1(t^2) & 0 \\
    0 & \phi_1(t^2) \\
  \end{array}
\right)+t^{\frac {2d}a}\left(
  \begin{array}{cc}
    \phi_2(t^2) & \phi_3(t^2) \\
    \phi_3(t^2) & -\phi_2(t^2) \\
  \end{array}
\right)
$$
for some smooth functions $\phi_k,\, k=1,2,3$.
\end{lem}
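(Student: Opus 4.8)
The plan is to exploit the $L$-equivariance of the metric, regarded as a map $V\to S^2(\fm)$, restricted to the $2$-plane $P^*=\ell'_{-1}=\spam\{\dot c(0),X\}$. Writing the metric on $\ell$ as the matrix-valued function $G(p)=\bigl(g(Y_i^*,Y_j^*)(p)\bigr)$, I would first record the equivariance relation. Using that $k_*Y^*=(\Ad(k)Y)^*$ for $k\in K$, that $L=\exp(\theta X)$ acts on $\ell$ by $R(d\theta)$ and on $P^*$ by $R(a\theta)$, the $G$-invariance of the metric gives
\[
G(c(t))=R(d\theta)^{T}\,G\bigl(R(a\theta)\,c(t)\bigr)\,R(d\theta)\qquad (t>0).
\]
Since $\exp(\theta X)$ sweeps out all rotations of $P^*$, the orbit of $c(t)$ is the whole circle of radius $t$, so this relation determines $G$ on all of $P^*\setminus\{0\}$ from its values $G(c(t))$ along the geodesic.

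Next I would decompose $S^2(\R^2)$ under conjugation by $\SO(2)$ into the line of multiples of the identity, on which conjugation is trivial, and the plane of trace-free symmetric matrices. Identifying the latter with $\C$ via $\left(\begin{smallmatrix} A & B\\ B & -A\end{smallmatrix}\right)\leftrightarrow A+iB$, a short computation shows that conjugation by $R(\alpha)$ becomes multiplication by $e^{2i\alpha}$. Thus I write $G(c(t))=u(t)\,\Id+[\text{trace-free}]$ with $u=\tfrac12(g_{11}+g_{22})$ and trace-free complex coordinate $w=\tfrac12(g_{11}-g_{22})+i\,g_{12}$. The relation above then says that at the point of $P^*$ with polar angle $\psi=a\theta$ the value of $G$ has trace part $u(t)$ and trace-free coordinate $e^{2id\theta}w(t)=e^{im\psi}w(t)$ with $m=2d/a$. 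The element $k_0=\exp(\tfrac{2\pi}{a}X)\in H$ fixes $P^*$ pointwise and acts on $\ell$ by $R(2\pi d/a)$, so $H$-equivariance forces $e^{2\pi i m}w\equiv w$; hence either $m=2d/a\in\Z$, or $w\equiv 0$ (in which case $\phi_2=\phi_3=0$ and the asserted form holds trivially), and I may assume $m\in\Z_{\ge 0}$.

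It then remains to convert smoothness of $G$ at the origin into conditions on $u$ and $w$. The trace part is radial, equal to $u(t)$ along every ray, so it extends smoothly across $0$ if and only if $u(t)=\phi_1(t^2)$ for a smooth $\phi_1$, by the same even-function criterion used on $\fp$. The trace-free part is a smooth $\C$-valued function $F$ on $P^*\setminus\{0\}$ with $F(R(\psi)\zeta)=e^{im\psi}F(\zeta)$ and $F(c(t))=w(t)$. Here lies the crux: I would expand $F$ in its formal Taylor series at the origin and use the weight-$m$ equivariance to see that only the monomials $\zeta^p\bar\zeta^{\,q}$ with $p-q=m$ survive, that is $\zeta^m(\zeta\bar\zeta)^q$ with $q\ge 0$; consequently $F$ is smooth at $0$ if and only if $F(\zeta)=\zeta^m\phi(|\zeta|^2)$ for a smooth $\phi$, equivalently $w(t)=t^{2d/a}\bigl(\phi_2(t^2)+i\,\phi_3(t^2)\bigr)$. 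The converse is immediate, since $\zeta^m\phi(|\zeta|^2)$ is visibly smooth. This weight-$m$ extension statement is the genuine analytic content of the lemma — the direct analogue, for the trace-free ($\C$-valued) part, of the radial criterion quoted from \cite{KW} for $\fp$ — and is the only step requiring care (note in particular that $w\equiv\text{const}\ne 0$ does \emph{not} give a smooth $F$, so the factor $t^{2d/a}$ is a real constraint). Reassembling, $g_{11}+g_{22}=2\phi_1(t^2)$, $g_{11}-g_{22}=2t^{2d/a}\phi_2(t^2)$ and $g_{12}=t^{2d/a}\phi_3(t^2)$, which is exactly the asserted matrix after renaming the smooth functions.
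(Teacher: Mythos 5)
Your proof is correct and follows essentially the same route as the paper: both decompose $S^2\ell$ under conjugation by $R(d\theta)$ into the trace part (an $L$-invariant, hence even, function) and a complex trace-free eigenfunction $w$ of weight $2d$, and both reduce the lemma to the analytic fact that a smooth weight-$m$ equivariant function on the plane has the form $z^{m}\phi(|z|^2)$ with $\phi$ smooth, with the converse being immediate. Your inline observation that $2d/a\notin\Z$ forces $w\equiv 0$ via the element $\exp(\tfrac{2\pi}{a}X)\in H$ is exactly the content of the remark the paper places immediately after the lemma, so it is a welcome but not substantively different addition.
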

\begin{proof}
The metric on $\ell$, restricted to the plane $ P^*\subset V$, can be represented by a matrix
$G(p)$ whose entries are functions of $p\in P^*$. We  identify $\ell\simeq\C$ and $P^*\simeq\C$ such that the action of $L$ is given by multiplication with  $e^{id \theta}$ on $\ell$ and $e^{ia \theta}$ on $P^*$. The  metric $G$ must be $L$ equivariant, i.e.
$$ G(p)=\begin{pmatrix} g_{11} &g_{12}\\ g_{12} &g_{22}\end{pmatrix} \ \text{ with } \ G(e^{ia \theta}p)=R(d\theta)G(p)R(-d\theta).$$

 The right hand side can also be  seen as a linear action of $L$ on $S^2\ell\simeq\R^3$ and we may describe it in terms of its (complex) eigenvalues and eigenvectors. We then get:
$$(g_{11}+g_{22})(e^{ia \theta}p)=(g_{11}+g_{22})(p)$$
$$(g_{12}+i (g_{11}-g_{22}))(e^{ia \theta}p)=e^{2d i\theta}(g_{12}+i (g_{11}-g_{22}))(p)$$
$$(g_{12}-i (g_{11}-g_{22}))(e^{ia \theta}p)=e^{-2d i\theta}(g_{12}-i (g_{11}-g_{22}))(p).$$
The first equality just reflects the fact that the trace is a similarity invariant. Let
$$w(p)=(g_{12}+i (g_{11}-g_{22}))(p).$$ Then the second equality says that
$w(e^{ia \theta} p)=e^{2 id \theta} w(p)$, and the third one is  the conjugate of the second. Setting $p=te_0, t\in \R$ and replacing $\theta$ by $\theta/a$, we get
$$
w(e^{i \theta} t)=e^{2 i\frac d a \theta} w(t)=(te^{i\theta})^{2 \frac da}\ t^{-2 \frac da} w(t).
$$
If we let $z=t e^{i\theta}$, then
\begin{equation*}
\label{expda}
w(z)=z^{2 \frac da}\frac{w(t)}{t^{2 \frac da} }\ \text{ or }\ z^{-2 \frac da} w(z)= t^{-2 \frac da} w(t), \ \text{ where }\  t=|z|.
\end{equation*}
The first equation says that if $w(z)$ is smooth, then $w(z)$ must have a zero of order $2\frac da$ at $z=0$. If so, the second equation says that
 the function $z^{-2 \frac da} w(z)$ is $L$-invariant.
 This means that $g_{11}+g_{22}$ and
$z^{-2 \frac da} w(z)$ must be  smooth functions of $|z|^2$. If we restrict $z^{-2 \frac da} w(z)$ to the real axis  and we  separate the real and the imaginary part this is equivalent to the existence of smooth functions $\phi_i$ such that
$$(g_{11}-g_{22})(t)=t^{2 \frac da} \phi_1(t^2),\qquad g_{12}(t)=t^{2 \frac da} \phi_2(t^2), \qquad (g_{11}+g_{22})(t)=\phi_3(t^2).$$
Conversely, given $3$ functions $g_{11},g_{22},g_{12}$ along the real axis that verify these relations, they admit a (unique) smooth $L$-invariant extension to $\C$. Indeed, the first two equalities guarantee that $z^{-2 \frac da} w(z)$ and hence $w(z)$ is a smooth function on $P^*$. The third equality guarantees that $g_{11}+g_{22}$, and hence $G(p)$, has a smooth extension to $P^*$.
\smallskip

\end{proof}

\begin{rem}\label{Weyl}
If $a$ does not divide $2d$,  the proof shows that $w(z)$ is smooth only if  $w(t)=0$ for all $t$. But then $g_{12}=0$ and $g_{11}=g_{22}$ is an even function. Thus in this Lemma, as well as in all following Lemmas's, in case of a fractional exponent of $t$, the term should be set to be $0$. In practice, this will follow already from $\Ad_H$ invariance.

Notice also that a Weyl group element is give by $w=\exp(i\frac{d}{a}\pi)$. Thus if $q=\frac{2d}{a}$ is odd, $w$ rotates the 2-plane $\ell$ and hence this module is not changed when it is necessary to select another one parameter group $L$.
\end{rem}

For inner products between different modules we have:

\begin{lem}\label{irredproducts} Let $\ell_1$ and $\ell_2$ be two irreducible $L$ modules in $\fm$ with basis $Y_1,Y_2$ resp. $Z_1,Z_2$  on which $L$ acts via a rotation $R(d_i\theta)$with $d_i>0$.
If the inner products between $\ell_1$ and $\ell_2$ are given by $h_{ij}=g_{c(t)}(Y^*_i,Z^*_j)$, then
$$
\left(
  \begin{array}{cc}
    h_{11} & h_{12} \\
    h_{21} & h_{22} \\
  \end{array}
\right)= t^{\frac {|d_1-d_2|}a}  \left(
  \begin{array}{cc}
    \phi_1(t^2) & \phi_2(t^2) \\
    -\phi_2(t^2) & \phi_1(t^2) \\
  \end{array}
\right)+t^{\frac {|d_1+d_2|}a}\left(
  \begin{array}{cc}
    \phi_3(t^2) & \phi_4(t^2) \\
    \phi_4(t^2) & -\phi_3(t^2) \\
  \end{array}
\right)
$$
for some smooth functions $\phi_k$.
\end{lem}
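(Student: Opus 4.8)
The plan is to transcribe the proof of \lref{irred} almost verbatim; the only new feature is that the inner products between two \emph{distinct} modules are recorded by a matrix that is no longer symmetric, so the relevant representation is the full space of $2\times 2$ matrices instead of $S^2\ell$.

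First I would collect the mixed inner products, restricted to the plane $P^*\subset V$, into a (generally non-symmetric) matrix $H(p)=(h_{ij}(p))$, and identify $\ell_1\simeq\ell_2\simeq\C$ and $P^*\simeq\C$ so that $L$ acts by multiplication with $e^{id_1\theta}$, $e^{id_2\theta}$ and $e^{ia\theta}$, respectively. Since $k_*Y_i^*=(\Ad(k)Y_i)^*$ and likewise for the $Z_j$, equivariance of the metric under $L$ gives
\begin{equation*}
H(e^{ia\theta}p)=R(d_1\theta)\,H(p)\,R(-d_2\theta).
\end{equation*}

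The key step is to decompose the two-sided action $H\mapsto R(d_1\theta)\,H\,R(-d_2\theta)$ on $\mathrm{Mat}_2(\R)\simeq\R^4$. Viewing a real $2\times 2$ matrix as an $\R$-linear map $\C\to\C$, the $\C$-linear maps $z\mapsto\lambda z$ form an invariant plane on which the action is $\lambda\mapsto e^{i(d_1-d_2)\theta}\lambda$, while the $\C$-antilinear maps $z\mapsto\mu\bar z$ form an invariant plane on which the action is $\mu\mapsto e^{i(d_1+d_2)\theta}\mu$. Concretely, the two complex functions
\begin{equation*}
u=\tfrac12(h_{11}+h_{22})+\tfrac i2(h_{21}-h_{12}),\qquad v=\tfrac12(h_{11}-h_{22})+\tfrac i2(h_{12}+h_{21})
\end{equation*}
satisfy $u(e^{ia\theta}p)=e^{i(d_1-d_2)\theta}u(p)$ and $v(e^{ia\theta}p)=e^{i(d_1+d_2)\theta}v(p)$. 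This plays the role of the eigenvector computation in \lref{irred}, with the weight $2d$ there replaced by $d_1-d_2$ and $d_1+d_2$.

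From here the argument is identical to \lref{irred}: restricting to $p=te_0$, replacing $\theta$ by $\theta/a$ and setting $z=te^{i\theta}$, one sees that smoothness of $u$ and $v$ across the origin is equivalent to $z^{-(d_1-d_2)/a}u(z)$ and $z^{-(d_1+d_2)/a}v(z)$ being smooth $L$-invariant functions, hence functions of $|z|^2$. Restricting to the real axis then gives $u(t)=t^{|d_1-d_2|/a}\phi(t^2)$ and $v(t)=t^{(d_1+d_2)/a}\psi(t^2)$ with $\phi,\psi$ smooth (one passes to $\bar u$ when $d_1<d_2$ to keep the exponent non-negative), and conversely these relations produce a smooth $L$-equivariant $H$. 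Separating real and imaginary parts and reassembling $H$ yields the asserted form, with $\phi_1,\phi_2$ coming from $\phi$ and $\phi_3,\phi_4$ from $\psi$. As in \rref{Weyl}, a fractional exponent forces the corresponding term to vanish, which is anyway dictated by $\Ad_H$ invariance. I expect the only genuine subtlety to be the bookkeeping that matches each $2$-dimensional summand with its weight, together with the sign conventions when $d_1<d_2$; the rest is a direct copy of the preceding lemma.
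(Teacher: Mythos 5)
Your proof is correct and follows essentially the same route as the paper: the same equivariance relation $H(e^{ia\theta}p)=R(d_1\theta)H(p)R(-d_2\theta)$, the same weight-space decomposition (your $u,v$ are, up to scalar multiples and conjugation, exactly the paper's eigenvectors $w_1=h_{11}+h_{22}+i(h_{12}-h_{21})$ and $w_2=h_{12}+h_{21}-i(h_{11}-h_{22})$, your $\C$-linear/$\C$-antilinear splitting being just a conceptual derivation of them), and the same reduction to the argument of \lref{irred}, including the conjugation trick when $d_1<d_2$.
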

\begin{proof}
$L$ acts on $\ell_1\oplus\ell_2$ via conjugation with $\diag(R(d_1\theta),R(d_2\theta))$ and hence
$$
\left(
  \begin{array}{cc}
    h_{11} & h_{12} \\
    h_{21} & h_{22} \\
  \end{array}
\right)\to   R(d_1\theta)\left(
  \begin{array}{cc}
    h_{11} & h_{12} \\
    h_{21} & h_{22} \\
  \end{array}
\right)R(-d_2\theta)
$$
This action has eigenvectors
$$w_1=h_{11}+h_{22}+i (h_{12}-h_{21}),\qquad w_2=h_{12}+h_{21}-i (h_{11}-h_{22})$$
with eigenvalues $e^{(d_1-d_2) i \theta}$ and $e^{(d_1+d_2) i \theta}$,  and their conjugates.   We set
$$w_1(e^{a i\theta} p)=e^{|d_1-d_2| i \theta} w_1(p),\qquad w_2(e^{a i\theta} p)=e^{(d_1+d_2) i \theta} w_2(p),$$
where we replaced, if necessary, $w_1$ by its conjugate.
A computation similar to the previous ones shows that a smooth extension to the origin is equivalent to
$$(h_{11}+h_{22})(t)=t^{\frac {|d_1-d_2|}a} \phi_1(t^2),\qquad (h_{11}-h_{22})(t)=t^{\frac {d_1+d_2}a} \phi_2(t^2)$$
$$(h_{12}-h_{21})(t)=t^{\frac {|d_1-d_2|}a} \phi_3(t^2),\qquad (h_{12}+h_{21})(t)=t^{\frac {d_1+d_2}a} \phi_4(t^2)$$
where $\phi_i, i=1,\ldots,4$, are smooth real functions. Conversely, these relationships enable one to extend $h_{11}\pm h_{22}$ and $h_{12}\pm h_{21}$, and hence all inner products, smoothly to $P^*$.
\end{proof}

For inner products with elements in $\ell_0$ we have:

\begin{lem}\label{irredtrivial} Let $\ell_0\subset\fm$ be the module on which $L$ acts as $\Id$, and $\ell$ an irreducible $L$ module with basis $Y_1,Y_2$  on which $L$ acts via a rotation $R(d\theta)$.
\begin{itemize}
\item[(a)] If $Y\in \ell_0$,  then $g_{c(t)}(Y^*,Y^*)$ is an even functions of $t$,
\item[(b)] If $Y\in \ell_0$ and $h_i=g_{c(t)}(Y^*,Y_i^*)$, then
$$h_1(t)=t^{\frac da} \phi_1(t^2),\quad h_2(t)=t^{\frac da} \phi_2(t^2),$$
for some smooth functions $\phi_k$.
\end{itemize}
\end{lem}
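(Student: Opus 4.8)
The plan is to treat the two parts separately, in both cases exploiting $L$-equivariance of the metric restricted to the plane $P^*\simeq\C$, exactly along the lines of \lref{irred} and \lref{irredproducts}.

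For part (a), I would first note that since $Y\in\ell_0$ and $L$ acts trivially on $\ell_0$, we have $k_*Y^*=(\Ad(k)Y)^*=Y^*$ for every $k=\exp(\theta X)\in L$. Equivariance of the metric then gives $g(Y^*,Y^*)(p)=g(Y^*,Y^*)(e^{ia\theta}p)$, so $p\mapsto g_p(Y^*,Y^*)$ is an $L$-invariant smooth function on $P^*$. By the principle recorded at the start of this section, a smooth $L$-invariant function on $P^*$ restricts to an even function along the line $te_0$, which is exactly the assertion that $g_{c(t)}(Y^*,Y^*)$ is even in $t$.

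For part (b), I would package the two mixed products into a single complex function $w=h_1-ih_2$ on $P^*$. Using $k_*Y^*=Y^*$ together with $k_*Y_i^*=(\Ad(k)Y_i)^*=(R(d\theta)Y_i)^*$, equivariance yields $\bigl(h_1(p),h_2(p)\bigr)^{\!\top}=R(d\theta)\bigl(h_1(e^{ia\theta}p),h_2(e^{ia\theta}p)\bigr)^{\!\top}$, which in complex form reads $w(e^{ia\theta}p)=e^{id\theta}w(p)$. This is the same functional equation as in \lref{irred}, but with the single weight $d$ in place of $2d$: pairing the trivial module $\ell_0$ against $\ell$ produces exactly one eigenvalue $e^{id\theta}$ rather than a sum or a difference of weights.

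The remaining analysis is then verbatim that of \lref{irred}. Setting $p=te_0$ and replacing $\theta$ by $\theta/a$ gives $w(e^{i\theta}t)=e^{i(d/a)\theta}w(t)$, so with $z=te^{i\theta}$ we obtain $z^{-d/a}w(z)=t^{-d/a}w(t)$. Smoothness of $w$ forces a zero of order $d/a$ at the origin and makes $z^{-d/a}w(z)$ an $L$-invariant, hence a smooth function of $|z|^2$; restricting to the real axis and separating real and imaginary parts produces $h_1(t)=t^{d/a}\phi_1(t^2)$ and $h_2(t)=t^{d/a}\phi_2(t^2)$. There is no serious obstacle here: the only point of care, exactly as in \rref{Weyl}, is that when $a$ does not divide $d$ the exponent is fractional and smoothness forces $w\equiv 0$, so that $h_1=h_2=0$; in practice this vanishing already follows from $\Ad_H$-invariance. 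Thus the entire difficulty is concentrated in correctly deriving the single-weight transformation law for $w$, after which the conclusion follows from the standard computation.
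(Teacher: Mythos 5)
Your proposal is correct and takes essentially the same approach as the paper: part (a) is the same $L$-invariance-implies-even observation, and in part (b) your transformation law $w(e^{ia\theta}p)=e^{id\theta}w(p)$ for the complexified pair of mixed products is exactly the extra eigenvector with eigenvalue $e^{id\theta}$ that the paper extracts by conjugating the $3\times 3$ matrix $G(p)$ by $\diag(R(d\theta),1)$. After that, both arguments conclude with the verbatim computation of \lref{irred}, including the caveat of \rref{Weyl} about fractional exponents.
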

\begin{proof}
 If $Y\in \ell_0$,  then  $g(Y^*,Y^*)$ is invariant under $L$ and hence an even function.

In case (b), we consider the restriction of the metric to the three dimensional space spanned by $\ell$ and $Y$. This can be represented by a matrix $G(p)=\begin{pmatrix} g_{11} &g_{12} &h_1\\ g_{12} &g_{22} &h_2\\h_1 &h_2 &h\end{pmatrix}$  whose entries are functions of $p\in P^*$. In particular, $h_i=g(Y_i^*,Y^*)$.  The action of $L$ on  $G(p)$ is given by conjugation with $\diag(R(d\theta),1)$.  Decomposing into eigenvectors, we get, in addition to the eigenvectors already described in \lref{irred}, the eigenvector $w(z)=h_1(z)+i h_2(z)$ with eigenvalue $e^{di\theta}$. But
$w(e^{ia\theta}p)=e^{di\theta} w(p)$ implies that $z^{-\frac da} w(z)$ is an invariant function. Thus smoothness for the $h_i$ functions is  equivalent to
$$h_1(t)=t^{\frac da} \phi_1(t^2),\quad h_2(t)=t^{\frac da} \phi_2(t^2)$$
for some smooth functions $\phi_i$.
\end{proof}

\subsection{Inner products between $\fp$ and $\fm$}
\label{slicem}
Recall that for an appropriately chosen  basis $e_0,\ldots,e_k$ of $V$, we need to show that the inner products $g( e_i,X_j^*)$, where $X_i$ is a basis of $\fm$, are smooth functions when restricted to the plane $P^*\subset V$. When restricting to the geodesic $c$, we obtain the smoothness conditions on the corresponding entries in the metric.

Recall also that the plane $P^*$ is spanned by $e_0=\dot c$ and $X\in\fp\subset V$ such that
$L=\{\exp(\gt X)\mid \theta\in\R\}$ is a closed one parameter group in $K$. We also have the decomposition of $V$:
  $$V=\ell'_{-1}\oplus\ell_0'\oplus\ell_1',\cdots,\ell_s' \text{ with } \ell'_{-1}=\spam\{\dot c(0),X\},\    L_{|\ell_{-1}'}=R(a\theta),\ L_{|\ell'_0}=\Id \text{ and } L_{|\ell_i'}=R(d_i'\theta)$$
which we use in the following. Finally, recall that $Z^*(c(t))=tZ\in V$ for $Z\in\fp$ and that $g_{c(t)}(\frac{\partial}{dt},X^*)=0$ for all $X\in\fp\oplus\fm$.

\begin{lem}\label{sliceX}
 Let $X\in \ell'_{-1}$.  Then we have:
\begin{itemize}
\item[(a)] If $Y\in \ell_0$,  then $g_{c(t)}(X^*,Y^*)=t^2\phi(t^2)$,
\item[(b)] If  $Y_1,Y_2$ a basis of the irreducible module $\ell=\ell_i$, on which $L$ acts as $R(d\theta)$ with $d>0$, then
$g_{c(t)}(X^*,Y_k^*)=t^2t^{\frac {d}a}\phi_k(t^2)$
\end{itemize}
for some smooth functions $\phi, \phi_k$.
\end{lem}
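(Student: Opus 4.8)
The plan is to reduce the mixed products $g_{c(t)}(X^*,Y^*)$ to ordinary coordinate coefficients and then run the eigenvector bookkeeping of \lref{irred} and \lref{irredproducts}. The geometric input I would record first is that, since $L=\{\exp(\theta X)\}$ acts on $P^*=\ell'_{-1}$ by $R(a\theta)$ and $c(t)=t\,e_0$ with $e_0=\dot c(0)$, differentiating $\exp(\theta X)\cdot c(t)$ at $\theta=0$ gives $X^*(c(t))=t\bar X$ with $\bar X=a\,e_1$, where $e_1$ is the coordinate direction in $P^*$ orthogonal to $\dot c$. Consequently
\[
g_{c(t)}(X^*,Y^*)=a\,t\,g_{c(t)}(e_1,Y^*),
\]
so everything reduces to the vanishing order at $t=0$ of the genuine metric coefficient $\beta(t):=g_{c(t)}(e_1,Y^*)$, which is one of the functions in the coordinate frame whose smoothness controls the metric on $P^*$.

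For part (b) I would set $\alpha_q=g(e_0,Y_q^*)$ and $\beta_q=g(e_1,Y_q^*)$ for $q=1,2$ and form $v_q=\alpha_q+i\beta_q$. Since $\Ad(\exp\theta X)$ acts on $\spam\{e_0,e_1\}$ by $R(a\theta)$ and on $\ell$ by $R(d\theta)$, the eigenvector computation of \lref{irredproducts}, applied with the two weights $a$ and $d$, shows that the relevant combinations of the $v_q$ are $L$-equivariant and transform with the exponents $|a-d|/a$ and $(a+d)/a$, so each equals $z^{|a\mp d|/a}$ times a smooth invariant function. The ingredient special to this situation is the orthogonality $g_{c(t)}(\dot c,Y_q^*)=0$, which forces $\alpha_q\equiv 0$ along $c$; hence on the geodesic both combinations reduce to expressions in $\beta_1,\beta_2$ alone, and smoothness imposes the stronger of the two vanishing orders, namely $(a+d)/a$. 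Reading off real and imaginary parts gives $\beta_q(t)=t^{(a+d)/a}\phi_q(t^2)$, and multiplying by $a\,t$ yields
\[
g_{c(t)}(X^*,Y_q^*)=a\,t\cdot t^{(a+d)/a}\phi_q(t^2)=t^{\,2+\frac da}\,\tilde\phi_q(t^2)=t^2\,t^{\frac da}\,\tilde\phi_q(t^2),
\]
as claimed. Part (a) is the case $d=0$, where the surviving weight is $a/a=1$, so $\beta(t)=t\,\phi(t^2)$ and $g_{c(t)}(X^*,Y^*)=a\,t\cdot t\,\phi(t^2)=t^2\,\tilde\phi(t^2)$.

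The step I expect to be the main obstacle, and the one deserving the most care, is precisely the reduction in the first paragraph together with the role of orthogonality. Pairing the two smooth Killing fields $X^*$ and $Y^*$ directly would only give, via the weight $d$, the weaker conclusion $g_{c(t)}(X^*,Y^*)=t^{d/a}\phi$; this is necessary but far from sufficient, because the smoothness of the metric is governed by the coordinate coefficient $g(e_1,Y^*)$ and not by $g(X^*,Y^*)$, and $X^*$ degenerates to first order at the origin. The sharp exponent therefore comes from pairing $Y^*$ against the rotation module $\ell'_{-1}$ rather than against $\fm$, which raises the weight from $d/a$ to $(a+d)/a$; the announced factor $t^2$ is exactly the product of the $t$ coming from $X^*(c(t))=t\bar X$ and the $t^{a/a}=t$ coming from this jump in weight. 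For the converse direction I would run the construction backwards, extending the equivariant combination and recovering $\alpha_q,\beta_q$ on all of $P^*$; fractional exponents are discarded exactly as in \rref{Weyl}, and in practice are already excluded by $\Ad_H$ invariance.
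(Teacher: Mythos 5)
Your proposal is correct and takes essentially the same route as the paper: you reduce the mixed products to the coordinate coefficients $g(e_1,Y_k^*)$ via $X^*(c(t))=t\bar X$, run the eigenvector analysis of \lref{irredproducts} with the two weights $a$ and $d$, and use the orthogonality $g(\dot c,Y_k^*)=0$ to force the stronger exponent $(a+d)/a$ before multiplying by the factor $t$ from $X^*$. The only cosmetic differences are that you track the factor $a$ in $\bar X=a e_1$ more carefully than the paper does, and you treat part (a) as the $d=0$ case of (b) whereas the paper runs the identical computation separately on the $3\times 3$ matrix.
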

\begin{proof}
For part (a) the proof is similar to    \lref{irredtrivial}.
On the 3-space spanned by $e_0=\dot c(0),\ e_1=X,\ e_2=Y$, the one parameter group $L$ acts via conjugation with $\diag(R(a\theta),1)$ and, using that fact that $Y^*$ is orthogonal to $\dot c$, the metric is given by
$G(p)=\begin{pmatrix} 1 &0 &0\\ 0 &1 &h\\0 &h &f\end{pmatrix}$
with $h=g( e_1,Y^*)$ and $f=g(Y^*,Y^*)$. We already saw that $f$ is an even function, and as in the proof of \lref{irredtrivial}, we see, when restricted to the geodesic, $h(t)=t^{\frac aa}\phi(t^2)=t\phi(t^2)$. Hence
$g_{c(t)}(X^*,Y^*)=tg_{c(t)}(e_2,Y^*)=t^2\phi(t^2)$.

For part (b) the proof is similar to    \lref{irredproducts}.
On the $4$ dimensional space spanned by $e_0,e_1$ and $Y_1,Y_2$ the group $L$ acts via conjugation with $\diag(R(a\theta,R(d\theta))$ and the metric is given by
$$G(p)=\begin{pmatrix} 1 &0 &0&0\\ 0 &1 &h_1 &h_2\\0 &h_1&g_{11} &g_{12}\\
0&h_2&g_{12}&g_{22}\end{pmatrix}$$
with $h_k=g( e_1,Y_k^*)$ and $g_{kl}=g( Y_k^*,Y_l^*)$. As in the proof of \lref{irredproducts} it follows that
$$
h_{2}(t)=t^{\frac {|d-a|}a} \phi_1(t^2)\ \ \text{ and }\ \ h_{2}(t)=t^{\frac {|d+a|}a} \phi_2(t^2)
$$
and hence $h_{2}(t)=t^{\frac {d}a+1} \phi(t^2)$,  and similarly for $h_1(t)$. Thus
$g_{c(t)}(X^*,Y_k^*)=tg_{c(t)}(e_2,Y_k^*)=   t^2t^{\frac {d}a}\phi_k(t^2)  $.
\end{proof}

Next the inner products with  $\ell_0'$.

\begin{lem}\label{slicetrivial} For $Z\in \ell_0'$ we have:
\begin{itemize}
\item[(a)] If $Y\in \ell_0$,  then $g_{c(t)}( Z^*,Y^*)=t^3\phi(t^2)$,
\item[(b)] If  $Y_1,Y_2$ is a basis of the irreducible module $\ell_i$, then
$g_{c(t)} (Z^*,Y_k^*)=t\,t^{\frac {d_i}a}\phi_k(t^2)$
\end{itemize}
for some smooth functions $\phi_i$.
\end{lem}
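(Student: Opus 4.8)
The plan is to treat both statements exactly as in the proof of \lref{sliceX}, the only new feature being that $Z$ now lies in the $L$-fixed summand $\ell_0'$ rather than in $\ell'_{-1}$. As there, I would use the identification of $\fp$ with a subspace of $V$, under which $Z^*(c(t))=tZ$ with $Z$ on the right read as the corresponding constant vector field $e_j$ on $V$. Thus $g_{c(t)}(Z^*,Y^*)=t\,g_{c(t)}(Z,Y^*)$, and the whole problem reduces to finding the vanishing order along the geodesic of $g(Z,Y^*)$, where $Z$ is a weight-zero vector field and $Y^*$ is an action field. For this I would invoke $L$-equivariance of the metric on the $2$-plane $P^*$, using that, since $Z\in\ell_0'$, the group $L$ fixes $Z$, i.e. $k_*Z=Z$ for every $k\in L$.

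For part (a), where $Y\in\ell_0$, one also has $k_*Y^*=(\Ad(k)Y)^*=Y^*$, so $g(Z,Y^*)$ is an $L$-invariant function on $P^*$. Since $L$ acts there by the rotations $R(a\theta)$, invariance forces it to be a smooth function of $|p|^2$, giving $g_{c(t)}(Z,Y^*)=\psi(t^2)$ for some smooth $\psi$. The delicate point is that invariance by itself produces only an even function; to gain the two extra powers of $t$ I would use that the slice $V$ is orthogonal to the singular orbit $G/K$ at $p_0$. As $Z\in V$ and $Y^*(p_0)=Y\in\fm$, this gives $\psi(0)=g_{p_0}(Z,Y)=0$, whence $\psi(t^2)=t^2\phi(t^2)$ and $g_{c(t)}(Z^*,Y^*)=t\,g_{c(t)}(Z,Y^*)=t^3\phi(t^2)$.

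For part (b), where $Y_1,Y_2$ is a basis of $\ell_i$ on which $L$ acts by $R(d_i\theta)$, the situation is identical to \lref{irredtrivial}(b), the weight-zero element there being replaced by $Z$: the group $L$ fixes $Z$ and acts on $(Y_1^*,Y_2^*)$ via $R(d_i\theta)$. Setting $w=g(Z,Y_1^*)+i\,g(Z,Y_2^*)$, equivariance gives $w(e^{ia\theta}p)=e^{id_i\theta}w(p)$, so that $z^{-d_i/a}w(z)$ extends to a smooth $L$-invariant function; this yields $g_{c(t)}(Z,Y_k^*)=t^{d_i/a}\phi_k(t^2)$ and, after multiplication by $t$, the asserted identity $g_{c(t)}(Z^*,Y_k^*)=t\,t^{d_i/a}\phi_k(t^2)$. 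Here $d_i>0$ already supplies the vanishing at the origin, so no orthogonality argument is needed.

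I expect the one genuinely non-formal step to be the sharpening in part (a). The naive weight count, with both weights zero, yields only an even function and hence would give $g_{c(t)}(Z^*,Y^*)=t\,\psi(t^2)$, vanishing merely to first order; recognizing that the stated third-order vanishing needs the extra geometric input that the slice $V$ is $g_{p_0}$-orthogonal to $\fm$ at $p_0$ (so that $\psi(0)=0$) is the crux. Everything else is the eigenvalue bookkeeping of \lref{irred}, \lref{irredproducts} and \lref{irredtrivial}, carried along with the factor of $t$ coming from $Z^*(c(t))=tZ$.
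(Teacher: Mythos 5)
Your proof is correct and follows essentially the same route as the paper: part (a) via $L$-invariance (evenness) sharpened by orthogonality of the slice to the singular orbit at $c(0)$ to get $t^2\phi(t^2)$ before multiplying by the factor $t$ from $Z^*(c(t))=tZ$, and part (b) by repeating the eigenvector argument of \lref{irredtrivial}(b), where $d_i,a>0$ already forces the required vanishing. You have correctly identified the orthogonality input in (a) as the one step beyond pure weight bookkeeping, which is exactly how the paper argues.
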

\begin{proof}
For part (a), let $Z=e_1$. Then  $g( e_1,Y^*)$ is $L$ invariant and hence even. Furthermore, it vanishes at $t=0$ since the slice is orthogonal to the singular orbit at $c(0)$. Hence $g( e_1,Y^*)=t^2\phi(t^2)$, which implies $g_{c(t)}( Z^*,Y^*)=t\,t^2\phi(t^2)$.

 Similarly for (b), using the proof of \lref{irredtrivial}, it follows that $g_{c(t)} (e_1,Y_k^*)=t^{\frac {d_i}a}\phi_k(t^2)$. Since $d_i,a>0$, this already vanishes as required. The proof now finishes as before.
\end{proof}

 And finally the remaining inner products:

\begin{lem}\label{sliceirred} Let $\ell_i'$ and $\ell_j$ with $i,j>0$ be two irreducible $L$ modules with basis $Z_1,Z_2$ resp. $Y_1,Y_2$  on which $L$ acts via a rotation $R(d_i'\theta)$ resp. $R(d_j\theta)$ with $d_i', d_j>0$.

 \begin{itemize}
\item[(a)] The inner products $h_{kl}=g_{c(t)}(Z^*_k,Y^*_l)$ satisfy
$$
\left(
  \begin{array}{cc}
    h_{11} & h_{12} \\
    h_{21} & h_{22} \\
  \end{array}
\right)= t^b\, t^{\frac {|d_i'-d_j|}a}  \left(
  \begin{array}{cc}
    \phi_1(t^2) & \phi_2(t^2) \\
    -\phi_2(t^2) & \phi_1(t^2) \\
  \end{array}
\right)+t\,t^{\frac {|d_i'+d_j|}a}\left(
  \begin{array}{cc}
    \phi_3(t^2) & \phi_4(t^2) \\
    \phi_4(t^2) & -\phi_3(t^2) \\
  \end{array}
\right)
$$
where $b=3$ if $d_i'=d_j$, and $b=1$ if $d_i'\ne d_j$,
\item[(b)]If $Y\in \ell_0$, then
$g_{c(t)} (Y^*,Z_k^*)=t\,t^{\frac {d_i'}a}\phi_k(t^2)$
\end{itemize}
for some smooth functions $\phi_i$.
\end{lem}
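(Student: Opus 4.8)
The plan is to reduce both parts to the computations already carried out in \lref{irredproducts} and \lref{irredtrivial}, the only new ingredient being the relation $Z_k^*(c(t))=t\,Z_k$ that comes from the identification of $\fp$ with a subspace of $V$. Concretely, for $Z_k\in\ell_i'\subset\fp$ I regard $Z_k$ as a constant vector field $e_{(k)}$ on $V$ and set $\bar h_{kl}=g(e_{(k)},Y_l^*)$, a function on $P^*$; restricting to the geodesic then gives $h_{kl}(t)=t\,\bar h_{kl}(t)$. The matrix $\bar H=(\bar h_{kl})$ is the genuine block of the metric $V\to S^2(\fp\oplus\fm)$ in the basis $e_{(k)},Y_l^*$, so establishing its smoothness on $P^*$ is exactly what is needed, and the stated conditions on $h_{kl}$ are obtained by multiplying through by $t$.

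For part (a), since $L$ acts on $\ell_i'\oplus\ell_j$ by $\diag(R(d_i'\theta),R(d_j\theta))$ and on $P^*$ by $R(a\theta)$, equivariance of the metric forces $\bar H(e^{ia\theta}p)=R(d_i'\theta)\,\bar H(p)\,R(-d_j\theta)$, which is precisely the transformation law analyzed in \lref{irredproducts} with $(d_1,d_2)=(d_i',d_j)$. The same diagonalization produces eigen-combinations of weights $d_i'-d_j$ and $d_i'+d_j$, and the now-standard argument—a smooth function satisfying $w(e^{ia\theta}p)=e^{mi\theta}w(p)$ vanishes to order $m/a$ and divides out to an invariant, hence even, function—puts $\bar H$ into the \lref{irredproducts} shape: $t^{|d_i'-d_j|/a}$ times the block $\bigl(\begin{smallmatrix}\psi_1&\psi_2\\-\psi_2&\psi_1\end{smallmatrix}\bigr)$ plus $t^{(d_i'+d_j)/a}$ times $\bigl(\begin{smallmatrix}\psi_3&\psi_4\\\psi_4&-\psi_3\end{smallmatrix}\bigr)$.

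The genuinely new point, and the step I expect to be the crux, is the case distinction yielding $b=3$ versus $b=1$. When $d_i'\neq d_j$ the small exponent $|d_i'-d_j|/a$ is already positive, so multiplying $\bar H$ by $t$ gives the factor $t^{1+|d_i'-d_j|/a}$ of the statement with $b=1$. When $d_i'=d_j$, however, the combinations $\bar h_{11}+\bar h_{22}$ and $\bar h_{12}-\bar h_{21}$ (those carrying the exponent $|d_i'-d_j|/a=0$) are merely even functions a priori; to extract the extra power I would invoke the fact recorded in the setup that $\fp$ and $\fm$ are orthogonal at $t=0$. Since $e_{(k)}$ lies in the normal space $V$ while $Y_l^*(c(0))$ is tangent to the singular orbit, every $\bar h_{kl}$ vanishes at the origin of $P^*$, and an even function vanishing there is divisible by $t^2$. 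Thus these two combinations carry an additional $t^2$; multiplying by $t$ produces $b=1+2=3$, while the weight-$(d_i'+d_j)$ block keeps only the universal factor $t$. This is exactly what the parameter $b$ encodes.

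Part (b) is the trivial-module analogue and runs as in \lref{irredtrivial}(b): with $Y\in\ell_0$ and $\bar h_k=g(Y^*,e_{(k)})$, the action $\diag(1,R(d_i'\theta))$ makes $w=\bar h_1+i\bar h_2$ transform with weight $d_i'$, so $z^{-d_i'/a}w$ is invariant and $\bar h_k(t)=t^{d_i'/a}\phi_k(t^2)$; multiplying by $t$ gives $g_{c(t)}(Y^*,Z_k^*)=t\,t^{d_i'/a}\phi_k(t^2)$, as claimed, and here no orthogonality input is needed since $d_i'>0$ already forces vanishing. For the converse in both parts I would run the construction backwards: functions of the stated form make each eigen-combination $z^{-m/a}w$ smooth and $L$-invariant, hence $\bar H$ (and the $\bar h_k$) extend smoothly and $L$-equivariantly across the origin of $P^*$, producing a smooth metric there.
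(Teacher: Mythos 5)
Your proposal is correct and follows essentially the same route as the paper: the paper's proof likewise repeats the argument of \lref{irredproducts} (resp.\ \lref{irredtrivial}(b)) for the constant fields coming from $Z_k\in\fp$ together with the Killing fields $Y_l^*$, inserts the overall factor $t$ from $Z_k^*(c(t))=tZ_k$, and in the case $d_i'=d_j$ imposes the additional requirement that the mixed inner products vanish at $t=0$ (orthogonality of the slice and the singular orbit), which is exactly your extraction of the extra $t^2$ giving $b=3$. You correctly identified both the crux of the case distinction and the reason it holds, so nothing is missing.
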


\begin{proof}
 (a) We repeat the proof of \lref{irredproducts} for the basis $e_1=Z_1,e_2=Z_2,e_3=Y_1,e_4=Y_2$ of $\ell_i'\uplus\ell_j$. But if $d_i'=d_j$, we have to require in addition that the inner products vanish at $t=0$, i.e., $\phi_1(0)=\phi_2(0)=0$, which means the first matrix must be multiplied by $t^2$. The proof then proceeds as before.

 (b) We proceed as in \lref{irredtrivial} (b).
\end{proof}
This finishes the discussion of all possible inner products in $\fn=\fp\oplus\fm$.

\bigskip
\subsection{Smoothness conditions for symmetric $2\times 2$ tensors}
The above methods can be applied to obtain the smoothness conditions for any $G$ invariant tensor, defined along a curve $c$ transverse to all orbits. One needs to take care though, since for a metric $g$ the slice and singular orbit are orthogonal at $t=0$, whereas for a general tensor this may not be the case.   For the purpose of applying this to the Ricci tensor,  we briefly discuss how to derive the smoothness conditions for any symmetric $2\times 2$ tensor $T$.
\smallskip

 The proofs in Section 3.2 show that for the functions $T(\fm,\fm)$  the conditions for $T$ and a metric $g$ are the same.

  For $T(\fp,\fp)$ the only difference is that now $T(X_i^*,X_j^*)= \phi_0 t^2\delta_{ij} + \phi_{ij}(t^2)t^4$ for $X_i\in\fp$,  where $X_i$ has unit length in $g_0$ and $\phi_0$ is a real number, which is allowed to be $0$. Notice also that $T(\fp_i,\fp_j)=0$ for $0< i < j$ since the $\Ad_H$ representations are inequivalent.

 A new feature is that, unlike in the case of a metric, the mixed terms  $T(\dot c(t),X^*)$ do not have to vanish if $X\in\fp\oplus\fm$ lies in a module on which $\Ad_H$ acts trivially. For the case of $X\in \fp_0$ one easily sees that:
  \begin{equation}\label{po}
 T_{c(t)}( \dot c,\dot c)=\psi_1(t^2), T_{c(t)}( \dot c,X^*)=t\psi_2(t^2),  T_{c(t)}( X^*,X^*)=t^2\psi_3(t^2),\ \psi_1(0)=\psi_3(0)=\phi_0.
  \end{equation}

  For the  case of  $T(\dot c,\fm_0)$, as well as   $T(\fp,\fm)$, one needs to examine the proof of the Lemma's in Section 3.3, keeping in mind that the values of $T$ on the 2-plane $\ell_{-1}=\{ \dot c, X\}$ is now more generally given by \eqref{po}.  In some cases, for a metric tensor, certain components are forced to have a zero of two orders higher at $t=0$ than a generic symmetric tensor since the regular orbits are orthogonal to the geodesic $c$. One easily sees that the conditions in \lref{sliceX}(a),  \lref{slicetrivial}(b) and \lref{sliceirred}(b) are the same, whereas  in \lref{sliceX}(b), \lref{slicetrivial}(a) and in \lref{sliceirred}(a) when $d_i'=d_j$, the allowed order for $T$ is two less.  We summarize the results in Table D.  Notice that \lref{sliceX}  contains the  values $T(\dot c,\fm_0)$.
  This difference is important when studying Einstein metrics, or prescribing the Ricci tensor, see \cite{VZ4}.
\smallskip

\section{Examples}

\bigskip

Before we illustrate the method with some examples, let us make some general comments.
\smallskip

We can choose an inner product $Q$ on $\fg$ which is $\Ad_K$ invariant on $\fm$, equal to $g_0$ on $\fp$ under the inclusion $\fp\subset V$, and such that the decomposition $\fn=\fp\oplus\fm$ is orthogonal.

 If $G$ is compact, one often starts with a biinvariant metric $Q$ on $\fg$. We point out though, that then $Q_{|\fp}$ is not always a multiple of the metric $g_0$.  Thus one needs to determine the real numbers $r_i>0$ such that $Q_{|\fp_i}=r_i g_0$, $i=1,\cdots,s$, which needs to be used in order to translate   the conditions in \eqref{ppart} into a basis of $\fp$ orthonormal in $Q$. We point out that if $s>1$, $r_i$ depends on $i$ since in that case the biinvariant metric  $Q_{| K}$ does not restrict to a constant curvature metric on $K/H$.
See Table 2.5 in  \cite{GZ}  for the values of $r_i$.

\smallskip

Smoothness is determined by the one parameter groups  $L=\{\exp(\gt v)\mid 0\le \gt \le 2\pi\}$, one for each irreducible $\fp$ module.
Since the action of $L$ on $\fm$ is given by the restriction of $\Ad_K$, the exponents $d_i$ can be determined in terms of Lie brackets, i.e. on $\ell_i$ we have

\begin{equation}\label{di}
[v,Y_1]=d_iY_2\ \  \text{ and }\ \  [v,Y_2]=-d_iY_2
\end{equation}
 where $Y_1,Y_2\in \ell_i$ are $Q$ orthogonal vectors of the same length. This also determines the  orientation of the basis so that $d_i>0$.
 The decomposition under $L$ can be recovered from the weight space decomposition of the action of $K$ on $\fm$ with respect to a maximal abelian subalgebra containing $v$. Thus, on each irreducible $K$ module in $\fm$, we have  $d_i=\alpha_i(v)$, for all weights $\alpha_i$, and hence the largest integer is $\lambda(v)$ where $\lambda$ is the dominant weight.

The slopes $d_i'$ are not determined by Lie brackets. One needs to use the knowledge of the embedding $H\subset K$  to determine the action of $K$, and hence $L$, on $V$. For the almost effective actions of $K$ on spheres, a choice of the vectors $v$ and the values of $a$ and $d_i'$ will be described in Section 6.

The  functions $g_{ij}(t)$ determining the metric  are usually given in terms of a decomposition of $\fh^\perp=\fn$ into $\Ad_H$ irreducible modules. But the decomposition
of $\fm$ into  irreducible modules under $L_i$ are usually quite different. Thus the entries of the metric in the  Lemmas of Section 3 are  linear combinations of $g_{ij}$. Furthermore, for different 2-planes $P_i^*$, the decomposition under $L_i=\exp(\gt v_i)$ is again typically not the same since the vectors $v_i$ do not lie in a common maximal torus. One may thus obtain different smoothness conditions for different one parameter groups $L_i$ which need to be combined to obtain the full smoothness conditions.

One can now row reduce these equations, which gives rise to relationships between the even functions. Substituting these, one can then express the $k$ metric coefficients in terms of $k$ even functions.

The conditions of order $0$ are equivalent to $K$ invariance. The conditions of order $1$ are equivalent to equivariance of the second fundamental form $B\colon S^2T\to T^\perp=V$ of the singular orbit $G/K$ with tangent space $T=T_{p_0}K/H$ under the action of $K$. Recall also that one has a Weyl group element $w\in K$ with $w(\dot c(0))=- \dot c(0)$, uniquely determined$\mod H$. Clearly $w\in L_i$ for all $i$, in fact $w=\exp(\frac\pi{a}v_i)$ up to a change by an element in $\Ad_H$. The property of the length squared being even or odd functions is already determined by the action of the Weyl group element on $\fm$, see \rref{Weyl} and Section 5.

\bigskip

Summarizing the method one needs to use the following steps:

\begin{itemize}
	\item[(a)] Decompose $\fn$ into $\Ad_H$ irreducible modules, which determines the coefficients $g_{ij}$ of the metric,
	\item[(b)] Choose one parameter groups $L=\exp(tX)$, one for each irreducible $\fp$ module. See Section 6 for convenient choices, as well as the value of the integers $a$ and $d_i'$,
		\item[(c)] Decompose $\fm$ into the sum of 2-dimensional modules $\ell_i$ under the action of $L$ and determine the integers $d_i$, using e.g. the description \eqref{di},
			\item[(d)] Express the coefficients of the inner products in $\ell_i$ in terms of the metric coefficients $g_{ij}$,
				\item[(e)] Use Table A and B to express the smoothness conditions in terms of even functions $\phi_i$,
					\item[(f)] Row reduce the equations coming from all one parameter groups $L_i$, and replace some of the even functions in terms of others,
					\item[(g)] Solve the resulting system of equations for the metric $g_{ij}$.
\end{itemize}

\bigskip

 \begin{center} {\bf Example 1}\end{center}

 \bigskip

 A simple  example is given by the groups $G=Sp(1)\times S^1, K=\{(e^{j\theta},1)\mid 0\le \theta\le 2\pi\}\cdot H$ and $ H\simeq \Z_4$ with generator $(i,i)$.
  There exists an infinite family of inequivalent cohomogeneity one actions on $S^5$ as a special case of the Kervaire sphere examples, see \cite{GVWZ}, the simplest one being  the tensor product action of $SO(3)SO(2)$ on $S^5$. For all of them one half of the group diagram is given by the above groups. Notice that the action of $K$ on the slice $V\simeq \C$ is given by $(q,z)\cdot v = zv$.

  If we let $X_1=(i,0),\; X_2=(j,0),\; X_3=(k,0)$ and $Y=(0,i)$ then we have the $\Ad_H$ invariant decomposition
$\fp=\fk=\R\cdot X_2$  and $\fm=\fm_0\oplus\fm_1$ with $\fm_0=\spam\{X_1,Y\} $, $\fm_1=\R\cdot X_3$. Since $\Ad_H$ acts as $\Id$ on $\fm_0$ and as $-\Id$ on $\fp\oplus \fm_1$ the nonvanishing inner products are given by
$$
f_i=\ml X_i,X_i\mr , i=1,2,3,\quad g=\ml Y,Y\mr,\quad h_1=\ml X_1,Y\mr,\quad h_2=\ml X_2,X_3\mr.
$$
 There is only the one parameter group $L=\{\exp(\theta X_2)\mid 0\le \theta\le 2\pi\}$ to be considered.   $L$ acts via $R(\theta)$ on $\ell_{-1}'=\spam\{ \dot c(0), X_2\}$, trivially on $\ell_0=\R\cdot Y$, and by $R(2\theta)$ on $\ell_1=\spam\{X_1,X_3\}$. Thus $a=1$ and $d_1=2$. According to Table B and C we have
 
 $$
f_1=\phi_5(t^2)+t^4\phi_6(t^2),\quad f_3=\phi_5(t^2)-t^4\phi_6(t^2),\quad  \quad g= \phi_2(t^2),
 $$
 and
 $$
f_2=t^2+t^4\phi_1(t^2), \quad h_1= t^2\phi_3(t^2), \quad h_2=t^4\phi_4(t^2).
 $$

 \smallskip

See also \cite{GVZ} Appendix 1 for a further class of examples with $K/H \simeq S^1$.

\bigskip

\begin{center} {\bf Example 2}\end{center}
\bigskip

In \cite{C}, the author studied cohomogeneity one Ricci flat metrics on the homogeneous disk bundle with $H=T^2\subset K=U(2)\subset G=SU(3)$, where we assume that $U(2)$ is the lower $2\times 2$ block. We illustrate that the smoothness conditions can be obtained with our methods quickly.

Let $E_{kl},iE_{kl}$, $k<l$, be the usual basis of $\fsu(3)$. Then the decomposition of $\fh^\perp$ into  $\Ad_H$ irreducible representations is given by:
$$
 \fn_1=\{E_{23},iE_{23}\},\ \fn_2=\{E_{12},iE_{12}\},\ \fn_3=\{E_{13},iE_{13}\}
$$
Since they are all inequivalent, the metric is determined by:
$$
f_1=|E_{23}|^2=|iE_{23}|^2,\ f_2=|E_{12}|^2=|iE_{12}|^2,\ f_3=|E_{13}|^2=|iE_{13}|^2
$$
The module $\fp=\fn_1$ is irreducible since $K/H=\Sph^2$ and we can choose $L=\exp(\theta E_{23})$. Since $\exp(\pi E_{23} )\in H$, we have $a=2$ and hence $\frac12  E_{23}$ has unit length in the Euclidean inner product $g_0$ on the slice. The decomposition under $L$ is
$$
\fm=\ell_1\oplus\ell_2 \text{ with } \ell_1=\{E_{12},E_{13},\}, \ell_2=\{iE_{12},iE_{13},\}  \text{ and } d_1=d_2=1
$$
Since $\fp$ is orthogonal to $\fm$, the decomposition of the slice $V$ is not needed.
Thus the metric is smooth if and only if
$$
f_1=4t^2,\ f_2+f_3=\phi_1(t^2),\  f_2-f_3=t\phi_2(t^2)
$$
for some smooth functions $\phi_1,\phi_2$.

\bigskip

 \begin{center} {\bf Example 3}\end{center}

 \bigskip

Let $H\subset K\subset G$ be given by $\SO(2)\subset \SO(3)\subset\SO(5)$, where the embedding of $\SO(3)$ in $\SO(5)$ is given by the unique irreducible representation of $\SO(3)$ on $\R^5$. The singular orbit $G/K$ is the  Berger space (which is positively curved in a biinvariant metric).

We consider the following basis of $\fg=\fso(5)$:
\begin{align*}
K_1&=2 E_{12}+E_{34},& K_2&=E_{23}-E_{14}+\sqrt{3} E_{45},&
K_3&=E_{13}+E_{24}+\sqrt{3} E_{35} \\
V_1&=\frac 1{\sqrt{5}} E_{12}-\frac 2{\sqrt{5}} E_{34},&
V_2&=\frac {\sqrt{2}}{\sqrt{5}} E_{45}-\frac {\sqrt{3}}{\sqrt{10}} (E_{23}-E_{14}),&
V_3&=\frac {\sqrt{2}}{\sqrt{5}} E_{35}-\frac {\sqrt{3}}{\sqrt{10}} (E_{13}+E_{24})  \\
V_4&=E_{25},\ \
V_5=E_{15}, & V_6&=\frac 1{\sqrt{2}} (E_{24}-E_{13}),& V_7&=-\frac 1{\sqrt{2}} (E_{23}+E_{14}).
\end{align*}

Then $K_1,K_2, K_3$ span the subalgebra $\fk \simeq \fso(3)$ with $[K_1,K_2]=K_3$ and cyclic permutations. Thus $K_i$ is orthonormal with respect to the biinvariant metric $Q_{\fso(3)}(A,B)=-\frac12 \tr(AB)$ which induces the metric of constant curvature $1$ on $SO(3)/SO(2)=\Sph^2$.  We choose the base point such that the Lie algebra of its stabilizer group $H$ is spanned by $K_1$. Hence $\dot c(0), K_2,K_3$ is  an orthonormal basis in the inner product $g_0$ on $V=\R^3$.  Notice that for the biinvariant metric $Q_{\fso(5)}(A,B)=-\frac12 \tr(AB)$ we have $Q_{\fso(5)}(A,B)=5Q_{\fso(3)}(A,B) $ for $A,B\in\fso(3)$.  Thus, if we abbreviate $Q=Q_{\fso(5)}$, we have $Q( K_i,K_j)= 5\delta_{ij}$. On the other hand,  $V_i$ are orthonormal unit vectors in $Q$.

We have the following decomposition of $\fp\oplus\fm$ as sum of irreducible $H$-modules:
$$\fp=\spam(K_2,K_3),\ \fm_0=\spam(V_1),\ \fm_1=\spam(V_2,V_3),\ \fm_2=\spam(V_4,V_5), \ \fm_3=\spam(V_6,V_7).$$
$\Ad_H$ acts trivially on $\fm_0$, with speed one on $\fp$ and $\fm_1$, and with speed 2 and 3 on $\fm_2$ resp. $\fm_3$. E.g., since $H=\{\exp(tK_1)\mid 0\le t\le 2\pi\}$, one needs to check that $[K_1,V_4]=2V_5$ and $ [K_1,V_5]=-2V_4$.
 Thus $\fp$ and $\fm_1$ are equivalent as $H$-modules while all the other modules are inequivalent. An $\Ad_H$ invariant metric $g$ along $c(t)$ is
 thus defined by the following functions:
$$f=\ml K_2,K_2\mr =\ml K_3,K_3\mr,\quad  g_1=\ml V_1,V_1 \mr,\quad g_2=\ml V_2,V_2\mr =\ml V_3,V_3\mr,$$
$$g_3=\ml V_4,V_4\mr =\ml V_5,V_5\mr,\quad g_4=\ml V_6,V_6\mr=\ml V_7,V_7\mr.$$
$$h_{11}=\ml K_{2},V_{2}\mr=\ml K_{3},V_{3}\mr,\quad h_{12}=\ml K_{2},V_{3}\mr=-\ml K_{3},V_{2}\mr.$$
and all other scalar products are zero.

For the  smoothness conditions, since  $\Ad_H$ acts irreducibly on $\fp$, we need to choose only one vector and set $X=K_2$ with $L=\exp(tK_2)\subset SO(3)$.
 Since $SO(3)$ acts standard on $V$, we have $a=1$. Furthermore, $V=\ell_{-1}'\oplus\ell_0$ with $\ell_{-1}'=\spam\{\dot c(0),K_2\}$ and $\ell_0'=\spam\{K_3\}$ since $L$ acts via rotations in the $\dot c(0),K_2$ plane, and hence trivially on $e_3=K_3^*(0)$.

Under the action of $L$, one easily sees that  $\fm$  decomposes as the sum of the following
irreducible modules:
\begin{align*}
\fl_0&=\spam(\sqrt{6} V_2+\sqrt{10} V_7),& \fl_1&=\spam(V_3+\sqrt{15} V_6,-\sqrt{6} V_1-\sqrt{10} V_4)\\
\fl_2&=\spam(\sqrt{10} V_2-\sqrt{6} V_7, 4 V_5),& \fl_3&=\spam(-\sqrt{15} V_3+ V_6,\sqrt{10} V_1-\sqrt{6} V_4).
\end{align*}
and a Lie bracket computation shows that under the action of $L$ we have $d_i=i$ for $ i=1,2,3$. E.g. $[K_2, -\sqrt{15} V_3+ V_6]=3(\sqrt{10}V_1-\sqrt{6} V_4)$ and $[K_2, \sqrt{10} V_1-\sqrt{6} V_4]=-3(-\sqrt{15} V_3+ V_6)$.

\bigskip

{ 1) \it Irreducible modules in $\fm$.}
We have three irreducible $L$-modules in $\fm$ and for each of them we apply \lref{irred}, and  use the notation $g_{ij}$ therein. Notice that due to $\Ad_H$ invariance, all vectors $V_i$ are orthogonal to each other.

For $\ell_1$ we have:
$$g_{11}=\ml V_3+\sqrt{15} V_6,V_3+\sqrt{15} V_6\mr=g_2+15 g_4$$
$$g_{22}=\ml-\sqrt{6} V_1-\sqrt{10} V_4,-\sqrt{6} V_1-\sqrt{10} V_4\mr=6 g_1+10 g_3,\qquad g_{12}=0.$$
 Since $d_1=1$ and $a=1$,  we need
 $$
 (g_2+15 g_4)+(6 g_1+10 g_3)=\phi_1(t^2),\quad
(g_2+15 g_4)-(6 g_1+10 g_3)=t^2\,\phi_2(t^2)
 $$

For $\ell_2$ we have:
$$g_{11}=\ml\sqrt{10} V_2-\sqrt{6} V_7,\sqrt{10} V_2-\sqrt{6} V_7\mr=10 g_2+6 g_4$$
$$g_{22}=\ml4 V_5,4 V_5\mr=16 g_3,\qquad g_{12}=0$$
Since $d_2=2$, smoothness requires that
$$(10g_2+6 g_4)+16 g_3=\phi_3(t^2),\quad
(10 g_2+6 g_4)-16 g_3=t^4\,\phi_4(t^2)$$

For $\ell_3$ we have:
$$g_{11}=\ml-\sqrt{15} V_3+ V_6,-\sqrt{15} V_3+ V_6\mr=15 g_2+g_4$$
$$g_{22}=\ml\sqrt{10} V_1-\sqrt{6} V_4,\sqrt{10} V_1-\sqrt{6} V_4\mr=10 g_1+6 g_3,\qquad g_{12}=0$$
Since $d_3=3$, we  need
$$(15g_2+  g_4)+(10 g_1+6 g_3)=\phi_5(t^2),\quad
(15 g_2+ g_4)-(10 g_1+6 g_3)=t^6\,\phi_6(t^2).$$

In particular, all functions $g_1,g_2,g_3,g_4$ are even, a fact that one could have already obtained from invariance of the metric under the Weyl group element.

For $\ell_0$, \lref{irredtrivial} says that $\ml\sqrt{6} V_2+\sqrt{10} V_7,\sqrt{6} V_2+\sqrt{10} V_7\mr= 6 g_2+10g_4 $  is even, a condition that is already implied by the previous ones.

\bigskip

{ 2) \it Products between modules in $\fm$.}
Inner products between $\ell_0$ and $\ell_2$, and between $\ell_1$ and $\ell_3$ are not necessarily $0$. For the first one,  \lref{irredtrivial} implies that
 $$\ml\sqrt{6} V_2+\sqrt{10} V_7,\sqrt{10} V_2-\sqrt{6} V_7\mr =\sqrt{60} (g_2- g_4)$$
 and hence $g_2- g_4=t^2\,\phi_7(t^2)$, a condition already implied by $K$ invariance at $t=0$.

 For the second one, \lref{irredproducts} and
 $$\ml V_3+\sqrt{15} V_6,-\sqrt{15} V_3+ V_6\mr =\sqrt{15} (g_4-g_2),\
 \ml -\sqrt{6} V_1-\sqrt{10} V_4,\sqrt{10} V_1-\sqrt{6} V_4 \mr =\sqrt{60} (g_3- g_1)$$
 as well as $$\ml V_3+\sqrt{15} V_6,\sqrt{10} V_1-\sqrt{6} V_4 \mr= \ml -\sqrt{6} V_1-\sqrt{10} V_4,-\sqrt{15} V_3+ V_6 \mr=0$$
 implies that
 $$
(g_4-g_2)-2 (g_3-g_1)=t^4\,\phi_7(t^2), \ (g_4-g_2)+ 2 (g_3-g_1)=t^2 \phi_8(t^2).
 $$

\bigskip

{ 3) \it Smoothness on the slice.}
Section \ref{slice} implies that $f=t^2+t^4\phi(t^2)$ since $a=1$.

\bigskip

{ 4) \it Products between $\fm$ and the slice $V$.}
All of the modules $\fl_i$ have  nontrivial inner products with the slice. For the 4
inner products between $\ell_{-1}'$, i.e. $K_2$, and $\ell_i$ we get from \lref{sliceX}:
$$
h_{11}=t^2\phi(t^2), \  h_{12}=t^3\phi(t^2), \ h_{11}=t^4\phi(t^2), \ h_{12}=t^5\phi(t^2).
$$

On the other hand, for the 4 inner products between $\ell_0'$, i.e. $K_3$, and $\ell_i$  we get from \lref{slicetrivial}:
$$
h_{12}=t^3\phi(t^2), \  h_{11}=t^2\phi(t^2), \ h_{12}=t^3\phi(t^2), \ h_{11}=t^4\phi(t^2).
$$
Thus we need:
$$
h_{11}=t^4\phi(t^2), \ h_{12}=t^5\phi(t^2).
$$

\bigskip

{ 5)\it Combining all conditions.}
Summarizing the conditions in 1) and 2), we have for the inner products in $\fm$:
$$\left\{
\begin{array}{l}
(g_2+15 g_4)+(6 g_1+10 g_3)=\phi_1(t^2)\\
(g_2+15 g_4)-(6 g_1+10 g_3)=t^2\,\phi_2(t^2)\\
(10g_2+6 g_4)+(16 g_3)=\phi_3(t^2)\\
(10 g_2+6 g_4)-(16 g_3)=t^4\,\phi_4(t^2)\\
(15g_2+  g_4)+(10 g_1+6 g_3)=\phi_5(t^2)\\
(15 g_2+ g_4)-(10 g_1+6 g_3)=t^6\,\phi_6(t^2)\\
(g_4-g_2)-2 (g_3-g_1)=t^4\,\phi_7(t^2)\\
g_2- g_4=t^2\,\phi_8(t^2), \ g_2- g_3=t^2\,\phi_9(t^2)\\
\end{array}\right.
$$
Notice that the last two conditions imply that $(g_3-g_1)=t^2\,\phi(t^2)$ and hence $K$ invariance at $t=0$ is  encoded in the above equations.

This is an over determined linear system of equations in the metric functions. Since we know there always exist solutions, we can row reduce in order to get the following relationships between the smooth functions:
$$\left\{\begin{array}{l}
\phi_1 = \phi_5-16 t^2\, \phi_7-2 t^4\,\phi_8\\
\phi_2 = t^4\,\phi_6-12\, \phi_7+2 t^2\,\phi_8\\
\phi_3= \phi_5-10 t^2\,\phi_7-5 t^4\,\phi_8\\
\phi_4=t^2\,\phi_6+5\,\phi_8
\end{array}\right.$$

Thus necessary and sufficient conditions for smoothness in $\fm$ are:
$$\left\{\begin{array}{l}
15 g_2+6 g_3+g_4+10g_1=\phi_5\\
15 g_2-6 g_3+g_4-10g_1=t^6\phi_6\\
-2 g_3+2g_1=t^2\widetilde{\phi}_7\\
-g_2-2 g_3+g_4+2g_1=t^4\phi_8\\
\end{array}\right.$$

which we can also solve for the metric and obtain (after renaming the even functions):

\begin{align*}
\hspace{4cm} g_1 &=\tfrac 1{32} \,\phi_1&+&\tfrac 3{16} t^2\,\phi_2&+&\tfrac 3{16} t^4 \,\phi_3 &-&\tfrac 1{32} t^6\,\phi_4\hspace{16cm}\\
\hspace{4cm} g_2 &= \tfrac 1{32} \phi_1&+&\tfrac 1{16}t^2\,\phi_2&&&+&\tfrac 1{32} t^6\,\phi_4 &\hspace{16cm}\\
\hspace{4cm} g_3 &= \tfrac 1{32}\,\phi_1&-&\tfrac 5{16} t^2\,\phi_2&-&\tfrac 5{16} t^4\,\phi_3&-&\tfrac 1{32} t^6\,\phi_4\hspace{16cm}\\
\hspace{4cm} g_4 &=\tfrac 1{32}\,\phi_1&-&\tfrac {15}{16} t^2\,\phi_2&&&+&\tfrac 1{32} t^6\,\phi_4 &\hspace{16cm}
\end{align*}

for some smooth functions $\phi_1,\phi_2,\phi_3,\phi_4$ of $t^2$. Furthermore,
$$
f=t^2+t^4\phi_5(t^2), \ h_{11}=t^4\phi_6(t^2), \ h_{12}=t^5\phi_7(t^2)
$$
 \bigskip

 \begin{center} {\bf Example 4}\end{center}.

 This example shows how to predict the exponents $d_k$ in terms of representation theory. Let $\phi_n $ be the complex $n$-dimensional irreducible representation of $SU(2)$. Choose $K=SU(2)\subset G=SU(2n)$ given by the embedding $\phi_{2n}$, and $H=SO(2)=\diag(e^{i\theta},e^{-i\theta}) \subset SU(2)$. Thus $K/H=\Sph^2$ with slice representation $\phi_3$ and hence $a=2$. By Clebsch-Gordon, the isotropy representation of $G/K$ is $\phi_{4n-2}\oplus\phi_{4n-4}\oplus\cdots\oplus\phi_2$. Thus the isotropy representation $G/H$ is the sum of 2 dimensional representations $\fn_i$ with multiplicity $i$ and weight $4n-2i$   for $i=1,\cdots,2n-2$ and $\fn_{2n-1}$ and $\fn_{2n}$ with multiplicity $2n-2$ and weight $2$ resp. $0$, as well as $\fn_{2n+1}$ with weight $2$ coming from the isotropy representation of $K/H$.

 We only need to consider the one parameter group $L=\exp(tA)$ with $A=\left(
                                   \begin{array}{cc}
                                       0 & 1 \\
                                       -1 & 0 \\
                                     \end{array}
                                   \right)$. Since $A$ is conjugate to $\diag(i,-i)$, the decomposition under $L$ has the same weights and multiplicity. Thus in the description of the metric, we have exponents $t^k$ for $k=1,\cdots,4n-2$.

\bigskip

\section{Proof of Theorem B}

 \bigskip

After we saw how the process works in concrete examples, we will now prove Theorem B.
One needs to first derive all smoothness conditions obtained from Section 3, possibly for several circles $L_i$. This gives rise to a highly over determined system of equations for the $r$ metric functions $g_{ij},\ i\le j$ of the form
$$
\sum_{i,j} a_{ij}^k\,g_{ij}(t)=t^{d_k}\phi_k(t^2),\quad k=1,\cdots, N.
$$
for some smooth functions $\phi_k$.

The coefficients $a_{ij}^k$ do not depend on the metric, but only on the Lie groups involved.
We first want to show that each metric function must be involved in at least one equation and hence  $N\ge r$. For this let $w\in K$ be a Weyl group element. Recall that $w$ is defined by $w(\dot c(0))=-\do \dot c(0)$ which defines it uniquely mod $H$. Furthermore, $w$ normalizes $H$ and $w^2\in H$. Let $\fn\subset \fp\oplus\fm$ be an irreducible module under the action of $H$. Then we have either $w(\fn)=\fn$ or  $w(\fn)=\fn'$ with $\fn'$ another irreducible module invariant under $H$ and equivalent to $\fn$.

 If  $w(\fn)=\fn$ and $X,Y\in\fn$ then $Q( X,Y) =Q( wX,wY) $  and hence
$$
g( X^*,Y^*)_{c(t)}= g( (wX)^*, (wY)^*)_{c(t)}=g( X^*,Y^*)_{c(-t)}
$$
implies that $g( X^*,Y^*)$ is an even function.

If $w\fn=\fn'$ with $X\in\fn,\ Y\in\fn'$, then we have
$$
g( X^*,Y^*)_{c(t)}- g( (wX)^*,(wY)^*)_{c(t)}= g( (wX)^*,(wY)^*)_{c(-t)}-g( X^*,Y^*)_{c(-t)}
$$
since $w^2\in H$. Thus $g( X^*,Y^*)- g( (wX)^*,(wY)^*)$ is an odd function, and similarly  $g( X^*,Y^*)+ g( (wX)^*,(wY)^*)$ is an even function.  Altogether, $N\ge r$.

We can now row reduce the systems, which we denote for short $A_kG=\Phi$. The last $N-r$ rows in $A_k$ will consist of zeroes which implies that there exists  linear homogeneous relationship between the even functions $\phi_k$. Solving for one of the variables, and substituting  into $\Phi$ we obtain a system of $r$ equations in $r$ unknowns. In the row reduced system we cannot have a further row of zeroes in $A_k$ since otherwise we can express the metric in terms of $r-1$ even functions, contradicting that the metric on the regular part consists of $r$ arbitrary functions. Thus $A_k$ has maximal rank $r$ and we can solve for $g_{ij}$ in terms of the remaining even functions. This proves Theorem B.

\bigskip

 \section{Actions on Spheres}

 \bigskip

 In order to facilitate the applications of  determining the smoothness conditions in examples, we discuss here the a choice for the vectors  $X$, the decomposition of the action by $L=\exp(tX)$ on the slice, and the integers $a, d_i'$. Since $L\subset K_0$, we can assume that $K$ is connected. Although the action of $K$ on $V$ can be highly ineffective, there exists a normal subgroup containing  $L$ acting almost effectively and transitively on the sphere in $V$. In Table A we list the almost effective transitive actions by connected Lie groups on spheres. The effective actions and the decomposition of $\fp$ into irreducibles one can e.g. find in \cite{Z}, and from this one easily derives the ineffective ones using representation theory.

 Recall that the inclusion $\fp\subset V$ is determined by the action fields of the action of $K$ on $V$. For each irreducible module we choose a vector $X\in\fp_i$  and  normalize $X$ such that $L=\{\exp(\gt v)\mid 0\le \gt \le 2\pi\}\subset K$ is a closed one parameter group. Furthermore, the integer $a=|L\cap H|$ is the ineffective kernel of the action of $L$ on $V$ and $V$
is the sum of two dimensional $L$ invariant  modules:
  $$V=\ell'_{-1}\oplus\ell_0'\oplus\ell_1',\cdots,\ell_s' \text{ with } \ell'_{-1}=\spam\{\dot c(0),X\}$$    and
  $$L_{|\ell_{-1}'}=R(a\theta),\ L_{|\ell'_0}=\Id \text{ and } L_{|\ell_i'}=R(d_i'\theta).$$
with $a,d_i'\in \Z$, which we can assume to be positive.

We choose a basis $e_1,e_2,\cdots  $ of $V$ and the geodesics $c(t)=te_1$.

\bigskip

We now discuss each transitive action, one at a time, using the numbering in Table A.
  \bigskip

  1) $\mathbf{K/H=SO(n+1)/SO(n)=\Sph^n}$

  \bigskip

 $\K$  acts by matrix multiplication $x\to Ax$ on $V=\R^{n+1}$ with orthonormal basis $e_1,e_2,\dots,e_{n+1}$. We choose the geodesic such that $c(t)=te_1$  and let $\H$ be the stabilizer group of $e_1$, i.e. $H=\{\diag(1,A)\mid A\in \SO(n)\}$.

  As usual, we  use the notation $E_{ij}$ for the skew symmetric matrix with non-zero entries in the $(i,j)$ and $(j,i)$ spot and biinvariant inner product $Q(A,B)=-\frac 12 \tr(AB)$. Then $\fp=\spam\{E_{12},\dots E_{1(n+1)}\}$ and for  the action fields we get $E_{1i}^*=e_{i}$.

  We choose the closed one-parameter group $L=\{\exp(\theta E_{12})\mid 0\le\theta\le 2\pi\}$ which induces a rotation $R(\theta)$ in the $e_1,e_2$ plane. Thu
   $$L=\{\exp(\theta E_{12})\mid 0\le\theta\le 2\pi\}$$
  $$\ell_{-1}'=\{\dot c(0),E_{12}\} \text{ with } a=1,\  \text{ and } \ell_0'=\{E_{13},\dots E_{1(n+1)}\} $$

 \bigskip

  1') $\mathbf{K/H=Spin(n+1)/Spin(n)=\Sph^n}$

  \bigskip

$Spin(n+1)$ acts via the two fold cover $Spin(n+1)\to SO(n+1)$ ineffectively on $V$.
Since $L\subset SO(n+1)$ is a generator in $\pi_1(SO(n+1))\simeq\Z_2$, the lift of $L\subset SO(n+1)$ to $Spin(n+1)$ has twice its length. Thus, if $\bar E_{12}$ is the lift of $E_{12}$, the one parameter group $L=\{\exp(\theta \bar E_{12})\mid \theta\in\R\}$ induces a rotation $R(2\theta)$ in the $e_0,e_1$ plane. Hence $\ell_{-1}'=\{\dot c(0),\bar E_{12}\} \text{ with } a=2$ and $\ell_0'$ as before.

\bigskip

2) $\mathbf{K/H=U(n+1)/U(n)=\Sph^{2n+1}}$

\bigskip

$\K$  acts by matrix multiplication $x\to Ax$ on $V=\C^{n+1}$ with orthonormal basis $e_1,ie_1,\dots,e_{n+1},$ $ie_{n+1}$. $\H$ is the stabilizer of $e_1$, i.e. $\H=\U(n)=\{\diag(1,A)\mid A\in \U(n)\}$.  Besides $E_{ij}$, we have the skew hermitian matrix $iE_{ij}$ (by abuse of notation). We use the inner product $Q(A,B)=-\frac 12 Re ( \tr(AB))$, and hence $\fp=\fp_0\oplus\fp_1$ with $\fp_0=\R\cdot F$ with $F= \diag(i,0,\cdots,0)$ and  $\fp_1=\spam\{E_{12},iE_{12},\dots E_{1(n+1)},iE_{1(n+1)}\}$. For the action fields we have  $F^*=ie_1$ and  $E_{1i}^*=e_{i},\ iE_{1i}^*=ie_{i},\ i=2,\cdots,n+1$.

We need to choose two closed one parameter subgroups,   $L_1=\{\exp(\theta E_{12})$ and $L_2=\exp(\theta F)$ with $0\le\theta\le2\pi$.

\smallskip

 $L_1$  induces a rotation $R(\theta)$ in the $e_1,e_2$ plane, and in the $ie_1, ie_2$ plane as well. Thus

$$
L_1=\{\exp(\theta E_{12})\mid 0\le\theta\le 2\pi\}
$$
$$\ell_{-1}'=\{\dot c(0),E_{12}\}, \text{ with } a=1,\ \ell_1'=\{ F,iE_{12}\}, \  \text{ with } d_1'=1  \text{ and } \ell_0'=\{E_{1r},iE_{1r},r\ge 3\} $$

\bigskip

Next, $L_2=\{\exp(\theta F)\mid 0\le\theta\le 2\pi\}$  induces a rotation $R(\theta)$ in the $e_1,ie_1$ plane, and as $\Id$ on the rest. Thus

$$L_2=\{\exp(\theta F)\mid 0\le\theta\le 2\pi\}$$
$$\ell_{-1}'=\{\dot c(0), F\}, \text{ with } a=1,\   \text{ and } \ell_0'=\{E_{1r},iE_{1r},\ r\ge 2\}$$

\bigskip

2') $\mathbf{K/H=U(n+1)/U(n)_k=\Sph^{2n+1}}$

\bigskip

In this case $\U(n+1)$ acts as $v\to (\det A)^k Av$ for some integer $k\ge 1$, and hence the stabilizer group of $e_1$ is $\H=\SU(n)\cdot\S^1_k$ with $\S^1_k=\diag( z^{nk},\bar z^{k+1}, \cdots, \bar z^{k+1} )$. Thus we have $\fp=\fp_0\oplus\fp_1$ as in case 2),  but now $\fp_0=\R\cdot F$ with $F= \diag( (k+1)i, ki, \cdots,  ki )$ and hence $F^*=(k+1)ie_1$.

\smallskip

The case of  $L_1=\exp(\theta E_{12})$ is as in the previous  case, except that
$\ell_1'=\{\frac1{k+1} F,iE_{12}\}$.

\smallskip

But now  $L_2=\{\exp(\theta F)\mid 0\le\theta\le 2\pi\}$ acts as $R((k+1)\theta)$ in the $e_1,ie_1$ plane, and $R(k\theta)$ in the $e_r, ie_r$ plane, $r\ge 2$. Hence

$$L_2=\{\exp(\theta F)\mid 0\le\theta\le 2\pi\}$$
$$\ell_{-1}'=\{\dot c(0),\frac1{k+1}  F\}, \text{ with } a=k+1, \ \text{ and }\ell_r'=\{E_{1r},iE_{1r}\},r\ge 2,  \text{ with } d'_r= k. $$

\bigskip

 2') $\mathbf{K/H=U(1)/Z_k=\Sph^{2n+1}}$

\bigskip

We list here separately the common case of $K=U(1)$ acting on $\C$ as $w\to z^kw$ with stabilizer group $Z_k$ the k-th roots of unity. Here $\fp=\fp_0$ spanned by $F=i$ with $F^*=kie_1$. Thus
$\ell_{-1}'=\{\dot c(0),\frac1{k}  F\}$ with $a=k$.

\bigskip

    3) $\mathbf{K/H=SU(n+1)/SU(n)=\Sph^{2n+1}}$

    \bigskip

    Same action and basis as in case 2, with $\H=\SU(n)=\{\diag(1,A)\mid A\in \SU(n)\}$.   But now $F= \diag(ni,-i,\cdots,-i)$ and hence $F^*=nie_1$.

    \smallskip
    Thus the result for  $L_1=\{\exp(\theta E_{12})\mid 0\le\theta\le 2\pi\}$ is as before, except that $ \ell_1'=\{ F,iE_{12}\}$.

    \smallskip

  Now  $L_2=\{\exp(\theta F)\mid 0\le\theta\le 2\pi\}$  induces a rotation $R(n\theta)$ in the $e_1,ie_1$ plane, and $R(-\theta)$ in the $e_k, ie_k$ plane, $k\ge 2$. Thus

   $$L_2=\{\exp(\theta F)\mid 0\le\theta\le 2\pi\}$$
    $$\ell_{-1}'=\{\dot c(0),\frac1n F\}, \text{ with } a=n,\   \text{ and } \ell_r'=\{iE_{1r},E_{1r}\},r\ge 2,  \text{ with } d'_r=1$$

 \bigskip

       4) $\mathbf{K/H=Sp(n+1)/Sp(n)=\Sph^{4n+3}}$

  \bigskip

$\K$  acts by matrix multiplication $x\to Ax$ on $V=\QH^{n+1}$, with orthonormal basis $e_0,ie_0,je_0,ke_0,\cdots $ and $\H$ is the stabilizer of $e_0$ i.e., $\H=\{\diag(1,A)\mid A\in \Sp(n)\}$, acting on $\fp=\Im\QH\oplus \QH^n$ as $(s,x) \to (s,Ax)$.
We  have the basis  of $\fk$ given by $E_{ij}, iE_{ij},jE_{ij},kE_{ij}$, where, by abuse of notation, the last three are skew hermitian, and $F_1=\diag(i,0,\cdots ,0),\ F_2=\diag(j,0,\cdots ,0),\ F_3=\diag(k,0,\cdots ,0)$. As before, $Q(A,B)=-\frac 12 Re(\tr(AB))$, and $\fp=\fp_0\oplus\fp_1$ with $\fp_0=\spam(F_1,F_2,F_3)$  and  $\fp_1=\spam\{E_{1r}, iE_{1r},jE_{1r},kE_{1r}, r=2,\cdots n+1\}$. For the action fields we have  $F_1^*=ie_1, F_2^*=je_1, F_3^*=ke_1$ and  $E_{1s}^*=e_{s},\ iE_{1s}^*=ie_{s},\ jE_{1s}^*(c(1)=je_{s},\ kE_{1s}^*=ke_{s},\ s=2,\cdots n+1, $.

 We need to consider four 1-parameter groups $L_1=\{\exp(\theta E_{12})\mid 0\le\theta\le 2\pi\}$,  $L_2=\exp(\theta F_1),\ L_3=\exp(\theta F_2)$ and $ L_4=\exp(\theta F_3)$ with $0\le\theta\le2\pi$.
\smallskip

 For $L_1$, acting on $V$, we get:

  $$L_1=\{\exp(\theta E_{12})\mid 0\le\theta\le 2\pi\}$$
  $$\ell_{-1}'=\{\dot c(0),E_{12}\}, \text{ with } a=1,\ \ell_1'=\{  F_1,iE_{12}\},\ \ell_2'=\{  F_2,jE_{12}\},\ \ell_3'=\{  F_3,kE_{12}\}  \text{ with } d'_r=1 $$
 $$\text{ and }\ell_0'=\{E_{1r},iE_{1r},jE_{1r},kE_{1r},\ r\ge 3\}. $$

 \smallskip

The one parameter group  $L_2=\exp(\theta F_1)$ rotates the planes $e_1,ie_1$ and $je_1,ke_1$ by $R(\theta)$ and fixes all remaining vectors. Thus

$$L_2=\{\exp(\theta F_1)\mid 0\le\theta\le 2\pi\}$$
$$\ell_{-1}'=\{\dot c(0),F_1\}, \text{ with } a=1,\ \ell_1'=\{  F_2,F_3\}  \text{ with } d'_1=1 $$
 $$\text{ and }\ell_0'=\{E_{1r},iE_{1r},jE_{1r},kE_{1r},\ r\ge 2\}, $$
and similarly for $L_3, L_4$.

        \bigskip

        5) $\mathbf{K/H=Sp(n+1)\cdot Sp(1)/Sp(n)\cdot\Delta Sp(1)=\Sph^{4n+3}}$
 \bigskip

The slice is $V=\QH^{n+1}$ with basis $e_1,ie_1,je_1ke_1,\cdots $ and $(A,q)\in\K$ acting as $v\to Avq^{-1}$. Here we are considering the effective action and thus $K=Sp(n+1)\times Sp(1)/\Z_2$ with $\Z_2=(-\Id,-1)$. The stabilizer group of $e_1$ is $\H=\Sp(n)\Delta Sp(1)=\{(\diag(q,A),q)\mid A\in \Sp(n), q\in Sp(1)\}\simeq Sp(n)\times Sp(1)/\Z_2$ acting on $\fp=\Im\QH\oplus \QH^n$ as $(s,x) \to (qsq^{-1},Axq^{-1})$.
Again, $\fp=\fp_0\oplus\fp_1$ with $\fp_0=\spam(F_1,F_2,F_3)$  and  $\fp_1=\spam\{E_{1r}, iE_{1r},jE_{1r},kE_{1r}, r=2,\cdots n+1\}$, but now  $F_1=(\diag(i,0,\cdots ,0),-i),\ F_2=(\diag(j,0,\cdots ,0),-j), \ F_3=(\diag(k,0,\cdots ,0),-k)$ with $F_1^*=2ie_1, F_2^*=2je_1, F_3^*=2ke_1$.

    We  need to consider only two 1-parameter groups $L_1=  \{ (\exp(\theta E_{12}),1)\mid 0\le\theta\le 2\pi\}$ and  $L_2=\{\exp(\theta F_1)\mid 0\le\theta\le 2\pi\}$.
\smallskip

For $L_1=\exp(\theta E_{12})$ we get:

 $$L_1=  \{ (\exp(\theta E_{12}),1)\mid 0\le\theta\le 2\pi\}$$
  $$\ell_{-1}'=\{\dot c(0),E_{12}\}, \text{ with } a=1, \ell_1'=\{ \tfrac12 F_1,iE_{12}\},\ell_2'=\{ \tfrac12 F_2,jE_{12}\},\ell_3'=\{ \tfrac12 F_3,kE_{12}\}  \text{ with } d'_r=1 $$
 $$\text{ and }\ell_0'=\{E_{1r},iE_{1r},jE_{1r},kE_{1r},\ r\ge 3\}. $$

 \smallskip

The one parameter group  $L_2$ rotates the planes $e_1,ie_1$ by $R(2\theta)$  and fixes all remaining vectors, including $F_2,F_3$. Thus

$$L_2=\{\exp(\theta F_1)\mid 0\le\theta\le 2\pi\}$$
$$\ell_{-1}'=\{\dot c(0),\tfrac12 F_1\}, \text{ with } a=2, \text{ and }\ell_0'=\{\tfrac12 F_2,\tfrac12 F_3, E_{1r},iE_{1r},jE_{1r},kE_{1r},\ r\ge 2\}. $$

\

        5') $\mathbf{K/H=Sp(n+1)\times Sp(1)/Sp(n)\times\Delta Sp(1)=\Sph^{4n+3}}$
  \smallskip

  The action is as in the previous case, but now with an ineffective kernel $\Z_2=(-\Id,-1)\in Sp(n+1)\times Sp(1)$. The decompositions and the integers though are the same.

        \

         6) $\mathbf{K/H=Sp(n+1)U(1)/Sp(n)\Delta U(1)_k=\Sph^{4n+3}}$

           \smallskip

This case is similar to the previous one, but here
          $\K$ acts as $v\to Av\bar z^k$ on $V=\QH^{n+1}$  with ineffective kernel $\{(\Id,z)\mid z^k=1\}$. Furthermore, $\H=\Sp(n)\Delta U(1)_k=\{(\diag(z^k,A),z)\mid A\in \Sp(n), z\in U(1)\}$.
          If $F_1=(\diag(i,0,\cdots ,0),-ki),\ F_2=(\diag(j,0,\cdots ,0),0), \ F_3=(\diag(k,0,\cdots ,0),0)$, then
         $\fp=\fp_0\oplus\fp_1\oplus\fp_2$ with $\fp_0=\spam(F_1)$, $\fp_1=\spam(F_2,F_3)$,   and  $\fp_2=\spam\{E_{1r}, iE_{1r},jE_{1r},kE_{1r}, r=2,\cdots n+1\}$. Furthermore, $(A,z)\in H$ acts on $\fp$ as $(s,x)\to (z^ksz^{-k},Axz^{-1})$, where $s\in\Im\QH=\fp_0\oplus\fp_1$. Notice that $H$ acts trivially on $\fp_0$  and that $F_1^*=(k+1)ie_1, F_2^*=je_1, F_3^*=ke_1$.
\smallskip

         We  need to consider the 1-parameter groups $L_1=(\exp(\theta E_{12}),1)$,  $L_2=\exp(\theta F_1)$ and $L_3=\exp(\theta F_2)$. For $L_1=\exp(\theta E_{12})$, similarly to case 6), we get:

   $$L_1=\{(\exp(\theta E_{12}),1)\mid 0\le\theta\le 2\pi\}$$
  $$\ell_{-1}'=\{\dot c(0),E_{12}\}, \text{ with } a=1, \ell_1'=\{ \tfrac{1}{k+1} F_1,iE_{12}\},\ell_2'=\{  F_2,jE_{12}\},\ell_3'=\{  F_3,kE_{12}\}  \text{ with } d'_r=1 $$
 $$\text{ and }\ell_0'=\{E_{1r},iE_{1r},jE_{1r},kE_{1r},\ r\ge 3\}. $$

\smallskip

 For $L_2$ on the other hand, we have

 $$L_2=\{\exp(\theta F_1)\mid 0\le\theta\le 2\pi\}$$
$$\ell_{-1}'=\{\dot c(0),\tfrac12 F_1\}, \text{ with } a=k+1, \ell_1'=\{  F_3,F_2\}  \text{ with } d'_1=k-1$$
$$
\text{ and }\ell_0'=\{ E_{1r},iE_{1r},jE_{1r},kE_{1r},\ r\ge 2\}. $$
\smallskip
For $L_3$ we have:
$$L_3=\{\exp(\theta F_2)\mid 0\le\theta\le 2\pi\}$$
$$\ell_{-1}'=\{\dot c(0), F_2\}, \text{ with } a=1, \ell_1'=\{  F_3,F_3\}  \text{ with } d'_1=1$$
$$
\text{ and }\ell_0'=\{ E_{1r},iE_{1r},jE_{1r},kE_{1r},\ r\ge 2\}. $$

 \bigskip

         7) $\mathbf{K/H=G_2/SU(3)=\Sph^6}$

   \bigskip

  We regard $\G_2$ as the automorphism group of the Cayley numbers with basis
  $1,i,j,k,\ell, i\ell, j\ell, k\ell$. This embeds $\G_2$ naturally into $\SO(7)$ and its action is transitive on $\Sph^6$. On the Lie algebra level, a skew symmetric matrix $(a_{ij})\in\fso(7)$ belongs to $\fg_2$ iff
  $$
  a_{23} + a_{45} + a_{76} = 0,\ a_{12} + a_{47} + a_{65} = 0,\ a_{13} + a_{64} + a_{75} = 0$$
$$a_{14} + a_{72} + a_{36} = 0,\
a_{15} + a_{26} + a_{37} = 0,\
a_{16} + a_{52} + a_{43} = 0,\
a_{17} + a_{24} + a_{53} = 0.
  $$

  Thus a basis for the Lie algebra $\fg_2\subset\fso(7)$ is given by
 $$ \left(
    \begin{array}{ccccccc}
      0 & x_1+x_2 & y_1+y_2& x_3+x_4 & y_3+y_4 & x_5+x_6 & y_5+y_6 \\
      -(x_1+x_2) & 0 & \alpha_1 & -y_5 & x_5 & -y_3 & x_3 \\
      -(y_1+y_2) & -\alpha_1 & 0 & x_6 & y_6 & -x_4 & -y_4 \\
      -(x_3+x_4) & y_5 & -x_6 & 0 & \alpha_2 & y_1 & -x_1 \\
      -(y_3+y_4) & -x_5 & -x_6 & -\alpha_2 & 0 & x_2 & y_2 \\
      -(x_5+x_6) & y_3 & x_4 & -y_1 & -x_2 & 0 & \alpha_1+\alpha_2 \\
      -(y_5+y_6) & -x_3 & y_4 & x_1 & -y_2 & -(\alpha_1+\alpha_2) & 0 \\
    \end{array}
  \right)
$$
The stabilizer group at $i$ is given by the complex linear automorphisms, which is equal to $\SU(3)$. Thus its Lie algebra $\fh$ is given by the constraints $x_i+x_{i+1}=y_i+y_{i+1}=0$ for $i=1,3,5$, and
 the complement $\fp$ by
$$ \left(
    \begin{array}{ccccccc}
      0 & 2x_1 & 2y_1& 2x_3 & 2y_3 & 2x_5 & 2y_5 \\
      -2x_1 & 0 & 0 & -y_5 & x_5 & -y_3 & x_3 \\
      -2y_1 & 0 & 0 & x_5 & y_5 & -x_3 & -y_3 \\
      -2x_3 & y_5 & -x_5 & 0 & 0 & y_1 & -x_1 \\
      -2y_3 & -x_5 & -x_5 & 0 & 0 & x_1 & y_1 \\
      -2x_5 & y_3 & x_3 & -y_1 & -x_1 & 0 & 0 \\
      -2y_5 & -x_3 & y_3 & x_1 & -y_1 & 0 & 0 \\
    \end{array}
  \right)
$$
  Since the action of $\Ad_H$ on $\fp$ is irreducible,
  it is sufficient to consider only one one-parameter group, and we choose $F=2E_{12}-E_{47}+E_{56}\in \fp$ with $L=\exp(\theta F)$. It acts as a rotation in the $e_4,e_7$ plane and $e_5,e_6$ plane at speed $1$, and in the $e_1,e_2$ plane at speed 2, and as $\Id$ on $e_3$.

Thus

$$L=\{\exp(\theta F)\mid 0\le \theta\le 2\pi\}$$
$$\ell_{-1}'=\{\dot c(0),F\} \text{ with } a=2,\   \ell_1'=\{  2E_{14}+E_{27}-E_{36},\ 2E_{17}+E_{35}-E_{24}\} \text{ with } d_1'=1
$$
$$\ell_2'=\{ 2E_{16}+E_{25}+E_{34},\ 2E_{15}-E_{26}-E_{37}\}
\text{ with } d_2'=1, \text{ and } \ell_0'=\{2E_{13}+E_{57}+E_{46}\}
$$

    \bigskip

         8) $\mathbf{K/H=Spin(7)/G_2=\Sph^7}$

  \bigskip

  The embedding $\Spin(7)\subset\SO(8)$, and hence the action of $\K$ on the slice, is given by the spin representation. On the Lie algebra level we can describe this as follows. A basis of $\fg_2\subset\fso(8)$ is given by the span of
 \begin{align*}
& E_{24}+E_{68},\ E_{28}+E_{46},\ E_{26}-E_{48}
 E_{23}+E_{67},\ E_{27}+E_{36},\ E_{34}+E_{78},\
E_{38}+E_{47},\ E_{37}-E_{48} \\
& E_{27}-E_{45},\
E_{23}+E_{58},\ E_{24}-E_{57},\ E_{28}+E_{35},\ E_{56}-E_{78},\
2\,\ E_{25}-E_{38}+E_{47}
\end{align*}
and the complement $\fp$ by the span of

$$ E_{12}+E_{56},\ E_{13}+E_{57},\ E_{14}+E_{58},\ E_{15}-E_{48} ,\ E_{16}+E_{25},\
E_{17}+E_{35},\ E_{18}+E_{45}.
$$
Since the action of $\Ad_H$ on $\fp$ is irreducible, we need to consider only one one-parameter group and we choose $L=\{\exp(\theta F)$ with $F=E_{12}+E_{56}$. It acts as a rotation in the $e_1,e_2$ plane and $e_5,e_6$ plane at speed $1$, and  as $\Id$ on $e_3,e_4,e_7, e_8$.

 Thus
$$L=\{\exp(\theta F)\mid 0\le \theta\le 2\pi\}$$
$$\ell_{-1}'=\{\dot c(0),F\} \text{ with } a=1,\   \ell_1'=\{  E_{15}-E_{48},\ E_{16}+E_{25}\} \text{ with } d_1'=1
$$
$$
 \text{ and } \ell_0'=\{ E_{13}+E_{57},\ E_{14}+E_{58},\
E_{17}+E_{35},\ E_{18}+E_{45}  \}.
$$

    \bigskip

         9) $\mathbf{K/H=Spin(9)/Spin(7)=\Sph^{15}}$

  \bigskip

  The embedding of $\H$ in $\K$ is given by the spin representation of $\Spin(7)$ in $\Spin(8)$ followed by the (lift of) the standard block embedding of $\Spin(8)$ in $\Spin(9)$.
  Let $S_{ij}$ be the  standard basis of $\mathfrak{spin}(9)$ under the isomorphism $\fso(9)\simeq\fspin(9)$  and denote by $E_{i,j}$ the standard basis of $\mathfrak{so}(16)$.  Furthermore, $\Spin(9)$ acts on the slice $V\simeq\R^{16}$ via the spin representation and one easily computes the image of $S_{ij}$ in $\mathfrak{so}(16)$. We only need the basis of $\fp=\fp_1\oplus\fp_2$.

The irreducible $7$-dimensional module $\mathfrak{p}_1$ is spanned by
\begin{align*}
Z_2:&=&-S_{78}+S_{12}+S_{34}+S_{56}&=&2 E_{1, 2}+E_{9, 10}+E_{11, 12}+E_{13, 14}-E_{15, 16}\\
Z_3:&=&S_{68}+S_{13}-S_{24}+S_{57}&=&2 E_{1, 3}+E_{9, 11}-E_{10, 12}+E_{13, 15}+E_{14, 16}\\
Z_4:&=&S_{58}+S_{14}+S_{23}-S_{67}&=&2 E_{1, 4}+E_{9, 12}+E_{10, 11}+E_{13, 16}-E_{14, 15} \\
Z_5:&=&-S_{48}+S_{15}-S_{26}-S_{37}&=&2 E_{1, 5}+E_{9, 13}-E_{10, 14}-E_{11, 15}-E_{12, 16}\\
Z_6:&=&-S_{38}+S_{16}+S_{25}+S_{47}&=&2 E_{1, 6}+E_{9, 14}+E_{10, 13}-E_{11, 16}+E_{12, 15} \\
Z_7:&=& S_{28}+S_{17}+S_{35}-S_{46}&=&2 E_{1, 7}+E_{9, 15}+E_{10, 16}+E_{11, 13}-E_{12, 14}\\
Z_8:&=&S_{18}-S_{27}+S_{36}+S_{45}&=&2 E_{1, 8}+E_{9, 16}-E_{10, 15}+E_{11, 14}+E_{12, 13}
\end{align*}

and the irreducible $8$-dimensional module $\mathfrak{p}_2$ is spanned by
$S_{i,9}$

$$\begin{array}{l}
S_{19}=\frac 12(E_{1, 9}+E_{2, 10}+E_{3, 11}+E_{4, 12}+E_{5, 13}+E_{6, 14}+E_{7, 15}+E_{8, 16})\\
S_{29}=\frac 12(E_{1, 10}-E_{2, 9}-E_{3, 12}+E_{4, 11}-E_{5, 14}+E_{6, 13}+E_{7, 16}-E_{8, 15})\\
S_{39}=\frac 12(E_{1, 11}+E_{2, 12}-E_{3, 9}-E_{4, 10}-E_{5, 15}-E_{6, 16}+E_{7, 13}+E_{8, 14})\\
S_{49}=\frac 12(E_{1, 12}-E_{2, 11}+E_{3, 10}-E_{4, 9}-E_{5, 16}+E_{6, 15}-E_{7, 14}+E_{8, 13})\\
S_{59}=\frac 12(E_{1, 13}+E_{2, 14}+E_{3, 15}+E_{4, 16}-E_{5, 9}-E_{6, 10}-E_{7, 11}-E_{8, 12})\\
S_{69}=\frac 12(E_{1, 14}-E_{2, 13}+E_{3, 16}-E_{4, 15}+E_{5, 10}-E_{6, 9}+E_{7, 12}-E_{8, 11})\\
S_{79}=\frac 12(E_{1, 15}-E_{2, 16}-E_{3, 13}+E_{4, 14}+E_{5, 11}-E_{6, 12}-E_{7, 9}+E_{8, 10})\\
S_{89}=\frac 12(E_{1, 16}+E_{2, 15}-E_{3, 14}-E_{4, 13}+E_{5, 12}+E_{6, 11}-E_{7, 10}-E_{8, 9})
\end{array}$$

If $e_1,\cdots,e_{16}$ is a basis of the slice, then $Z_i^*=e_i,\ i=2,\cdots,8$ and $S_{i9}^*=e_{i+8},\ i=1,\cdots,8$.

For the smoothness conditions we need to choose two one parameter groups. For $L_1=\exp(\theta Z_2)$  we obtain
$$L_1=\{\exp(\theta Z_2)\mid 0\le \theta\le 2\pi\}$$
$$\ell_{-1}'=\{\dot c(0),Z_2\} \text{ with } a=2,\   \ell_i'=\{  S_{i,9},\ S_{i+1,9} \},\ i=1,3,5,7\text{ with } d_i'=1 \text{ for } i=1,3,5 , \  d_7'=-1
$$
$$
 \text{ and } \ell_0'=\{ Z_3,\cdots, Z_8  \}.
$$
In $\ell_7'$ we should reverse the order of the basis so that $d_7'=1$.
\smallskip

For  $L_2=\exp(\theta S_{19})$ we have
$$L_2=\{\exp(\theta S_{19})\mid 0\le \theta\le 2\pi\}$$
$$\ell_{-1}'=\{\dot c(0),S_{19}\} \text{ with } a=1,\   \ell_i'=\{ Z_i, S_{i,9} \},\ i=2,\cdots 8\text{ with } d_i'=1
$$

 \providecommand{\bysame}{\leavevmode\hbox
to3em{\hrulefill}\thinspace}

\renewcommand{\thetable}{\Alph{table}}
\renewcommand{\arraystretch}{1.4}
\stepcounter{equation}
\begin{table}[!h]
     \begin{center}
         \begin{tabular}{|c|c|c|c|c|c|c|c|c|}
\hline
&K &H & $\fp_i$ &$\dim \fp_i$\\ \hline \hline
1&$\SO(n+1$) &$SO(n)$ &$\fp_1$ &n\\ \hline
1'&$\Spin(n+1$) &$Spin(n)$ &$\fp_1$ &n\\ \hline
2&$\U(n+1)$ &$\U(n)$ &$\fp_0+\fp_1$ &1, 2n\\ \hline
2'&$\U(n+1)$ &$\U(n)_k$ &$\fp_0+\fp_1$ &1, 2n\\ \hline
3&$\SU(n+1)$ &$\SU(n)$ &$\fp_0+\fp_1$ &1, 2n\\ \hline
4&$\Sp(n+1)$ &$\Sp(n)$ &$\fp_0+\fp_1$ &3, 4n\\ \hline
5&$\Sp(n+1)\cdot \Sp(1)$ &$\Sp(n)\Delta \Sp(1)$ &$\fp_1+\fp_2$ &3, 4n\\ \hline
5'&$\Sp(n+1)\times \Sp(1)$ &$\Sp(n)\times \Delta \Sp(1)$ &$\fp_1+\fp_2$ &3, 4n\\ \hline
6&$\Sp(n+1)\cdot \U(1)$ &$\Sp(n)\Delta\U(1)$ &$\fp_0+\fp_1+\fp_2$ &1, 2, 4n\\ \hline
6'&$\Sp(n+1)\times \U(1)$ &$\Sp(n)\Delta \U(1)_k$ &$\fp_0+\fp_1+\fp_2$ &1, 2, 4n\\ \hline
7&$\G_2$ &$\SU(3)$ &$\fp_1$\ &6\\ \hline
8&$\Spin(7)$ &$\G_2$ &$\fp_1$&7\\ \hline
9&$\Spin(9)$ &$\Spin(7)$ &$\fp_1+\fp_2$ &8, 7\\
\hline
         \end{tabular}
     \end{center}
     \vspace{0.1cm}
     \caption{Almost effective transitive actions on spheres}\label{transitive}
\end{table}

\renewcommand{\arraystretch}{1.8}
\begin{table}[!h]
     \begin{center}
         \begin{tabular}{|c|c|c|c|c|c|c|c|c|}
\hline
$\ml \fm,\fm\mr$ & $\ell_0$ & $\ell_i$ & $\ell_j$\\ \hline \hline
$\ell_0$&$\phi(t^2)$ &$ t^{\frac{d_i}{a}}\phi(t^2)$ & $ t^{\frac{d_j}{a}}\phi(t^2)$ \\ \hline
$\ell_i$&$ t^{\frac{d_i}{a}}\phi(t^2)$ &
\begin{minipage}{99pt}\vspace{5pt}
$g_{11}+g_{22}=\phi_1(t^2)$\\
$g_{11}-g_{22}=t^{\frac{2d_i}{a}}\phi_2(t^2)$\\
$\hspace{10pt} \ g_{12}=t^{\frac{2d_i}{a}}\phi_3(t^2)$ \vspace{1pt}
\end{minipage}
  & \begin{minipage}{250pt}
$h_{11}+h_{22}=t^{\frac{|d_i-d_j|}{a} }\phi_1(t^2),\ h_{11}-h_{22}=t^{\frac{|d_i+d_j|}{a} }\phi_1(t^2)$\\
$h_{12}-h_{21}=t^{\frac{|d_i-d_j|}{a} }\phi_1(t^2),\ h_{12}+h_{22}=t^{\frac{|d_i+d_j|}{a} }\phi_1(t^2)$
\end{minipage}\\
\hline
         \end{tabular}
     \end{center}
     \vspace{0.1cm}
     \caption{Smoothness Conditions I for $G$ invariant metrics or symmetric $2\times 2$ tensors}\label{smooth3}
\end{table}

\renewcommand{\arraystretch}{1.8}
\begin{table}[!h]
     \begin{center}
         \begin{tabular}{|c|c|c|c|}
\hline
$\ml \fp,\fm\mr$ & $\ell_0$ & $\ell_j$ \\ \hline \hline
$\ell_{-1}'$ &  $t^2\phi(t^2)$ & $t^2 t^{\frac{d_j}{a}}\phi(t^2)$  \\ \hline
$\ell_0'$&$ t^3\phi(t^2)$ &
$t\, t^{\frac{d_j}{a}}\phi(t^2)$ \\ \hline
$\ell_i'$ & $t\, t^{\frac{d_i'}{a}}\phi(t^2)$ &
\begin{minipage}{250pt}\vspace{5pt}
$h_{11}+h_{22}=t^bt^{\frac{|d_i'-d_j|}{a} }\phi_1(t^2),\ h_{11}-h_{22}=t\,t^{\frac{|d_i'+d_j|}{a} }\phi_2(t^2)$\\
$h_{12}-h_{21}=t^bt^{\frac{|d_i'-d_j|}{a} }\phi_3(t^2),\ h_{12}+h_{21}=t\,t^{\frac{|d_i'+d_j|}{a} }\phi_4(t^2)$\vspace{5pt}
\\
 $b=3$ if $d_i'=d_j$, and $b=1$ if $d_i'\ne d_j$
\\
\end{minipage} \\
\hline
         \end{tabular}
     \end{center}
     \vspace{0.1cm}
     \caption{Smoothness Conditions II for $G$ invariant metrics}\label{smooth1}
\end{table}

\

\renewcommand{\arraystretch}{1.8}
\begin{table}[!h]
	\begin{center}
		\begin{tabular}{|c|c|c|c|}
			\hline
			$\ml \fp,\fm\mr$ &  $\ell_0$ & $\ell_j$ \\ \hline \hline
			$\ell_{-1}'$  &
			$t^2\phi(t^2)$ &
		    \begin{minipage}{350pt}\vspace{5pt}
			$ T(\dot c, Y_1^*)+T(X^*,Y_2^*)=t^{\frac{d_j}{a}}\phi_1(t^2),\ T(\dot c, Y_1^*)-T(X^*,Y_2^*)=t^2t^{\frac{d_j}{a}} \phi_2(t^2)$
			\\
			$ T(\dot c, Y_2^*)-T(X^*,Y_1^*)=t^{\frac{d_j}{a}}\phi_1(t^2),\ T(\dot c, Y_2^*)+T(X^*,Y_1^*)=t^2t^{\frac{d_j}{a}} \phi_2(t^2)$
			\\
			\end{minipage}
			\\
			\hline
			$\ell_0'$ &$ t\,\phi(t^2)$ &
			$t\, t^{\frac{d_j}{a}}\phi(t^2)$ \\ \hline
			$\ell_i'$ & $t\, t^{\frac{d_i'}{a}}\phi(t^2)$ &
			\begin{minipage}{250pt}\vspace{5pt}
				$T_{11}+T_{22}=t\, t^{\frac{|d_i'-d_j|}{a} }\phi_1(t^2),\ T_{11}-T_{22}=t\,t^{\frac{|d_i'+d_j|}{a} }\phi_2(t^2)$\\
				$T_{12}-T_{21}=t\,t^{\frac{|d_i'-d_j|}{a} }\phi_3(t^2),\ T_{12}+T_{21}=t\,t^{\frac{|d_i'+d_j|}{a} }\phi_4(t^2)$
				\\
			\end{minipage}
			\\
			\hline
			
		\end{tabular}
	\end{center}
	\vspace{0.1cm}
	\caption{Smoothness Conditions II for a $G$ invariant symmetric $2\times 2$ tensor $T$}\label{smooth2}
In Table D recall that $\ell_{-1}'=\{\dot c, X\}$ and $\ell_j=\{Y_1,Y_2\}$.
\end{table}

\end{document}